\newtheorem{theorem}{Theorem}[section]
\newtheorem{corollary}[theorem]{Corollary}
\newtheorem{lemma}[theorem]{Lemma}
\theoremstyle{definition}
\newtheorem{remark}[theorem]{Remark}
\DeclareMathOperator{\arginf}{arg\,inf}
\DeclareMathOperator{\bias}{bias}
\DeclareSymbolFont{bbsymbol}{U}{bbold}{m}{n}
\DeclareMathSymbol{\ind}{\mathbin}{bbsymbol}{'061}
\title[Parametric estimation of quantile versions of Zenga and D curves\ldots]{Parametric estimation of quantile versions of Zenga and D inequality curves: methodology and application to Weibull distribution}
\author[S{.} Pi\c{a}tek]{Sylwester Pi\c{a}tek}
\keywords{concentration curve, minimum distance estimator, quantile function, maximum likelihood estimation, Gini index}
\subjclass[2020]{Primary 62F10; Secondary 62P20, 62F12}
\address[S.P.]{Faculty of Pure and Applied Mathematics\\ Wroc{\l}aw University 
	of Science and Technology\\
	Wybrze\.ze Wyspia\'nskiego 27,
	50-370 Wroc{\l}aw, Poland
}
\email{sylwester.piatek@pwr.edu.pl}
\begin{document}

\begin{abstract}
Inequality (concentration) curves such as Lorenz, Bonferroni, Zenga curves, as well as a~new inequality curve --- the $D$~curve, 
are broadly used to analyse inequalities in wealth and income distribution in certain populations. 
Quantile versions of these inequality curves are more robust to outliers. 
We discuss several parametric estimators of quantile versions of the Zenga and $D$~curves. 
A~minimum distance (MD) estimator is proposed for these two curves and the indices related to them. 
The consistency and asymptotic normality of the MD estimator is proved.
The MD estimator can also be used to estimate the inequality measures corresponding to the quantile versions of the inequality curves.
The estimation methods considered are illustrated in the case of the Weibull model, which is often applied to the precipitation data or times to the occurrence of a~certain event.
\end{abstract}

\maketitle

\section{Introduction}\label{sec:intro}
Inequality curves, for example the Lorenz curve \cite{Lorenz1905}, are broadly used to analyse inequalities of wealth and income distribution in certain populations (see e.g. \cite{Bandourian2002}). Dozens of other applications in various fields of science appeared in recent years, which leads to the idea of proposing some alternatives for the most popular inequality curve, i.e. the Lorenz curve, and for associated with it an inequality measure, which is the Gini index. Lorenz curve is not defined for distributions with infinite expected value and is sensitive to outliers. Some alternatives, like the Bonferroni curve \cite{Bonferroni1930}, the Zenga-07 curve \cite{Zenga2007} and $D$~curve \cite{Davydov2020}, are defined by their relationship with the Lorenz curve, and for this reason they share these drawbacks. 

Prendergast and Staudte proposed three quantile versions of the Lorenz curve that could also be applied to the distributions with infinite mean value \cite{Prendergast2016}. Some methods of estimating these curves were described by Siedlaczek \cite{Siedlaczek2018}. In \cite{Prendergast2018} Prendergast and Staudte also proposed the $R$ curve and elaborated on its properties, estimation, and possible applications. They also suggested an alternative curve, which together with the $R$ curve after a~minor transform gives quantile counterparts of the Zenga and $D$ curves, respectively, denoted here as the $qZ$ and $qD$ curves. 
The problem of nonparametric estimation of $qZ$ and $qD$ was covered by Jokiel-Rokita and Piątek in \cite{JokPia2023}.

In this paper we focus on the problem of parametric estimation of two quantile concentration curves, 
namely $qZ$, which is an alternative to the~Zenga curve \cite{Zenga2007} and $qD$, an alternative to the~$D$ curve \cite{Davydov2020}.
Let $X$ be a~nonnegative random variable with cumulative distribution function (cdf) $F_{\theta}(t)=P_{\theta}(X\leq t),$ where ${\theta}\in\Theta$ is an unknown parameter.
Denote by
\begin{equation*}
Q_{\theta}(p)=F_{\theta}^{-1}(p)=\inf\{t\in\mathbb{R}:F_{\theta}(t)\geq p\}
\end{equation*}
the quantile function (qf).
The $qZ$ and $qD$ concentration curves are defined as follows:
\begin{equation*}\label{e:qZ}
qZ(p;Q_{\theta})=1-\frac{Q_{\theta}(p/2)}{Q_{\theta}((1+p)/2)}, \qquad p\in(0,1)
\end{equation*}
and
\begin{equation*}\label{e:qD}
qD(p;Q_{\theta})=1-\frac{Q_{\theta}(p/2)}{Q_{\theta}(1-p/2)}, \qquad p\in(0,1).
\end{equation*}
In addition, these curves are defined at the extreme points $p=0$ and $p=1$ as follows:
\begin{align*}
    qZ(0;Q_{\theta})=1=qZ(1;Q_{\theta}), 
    \quad
    qD(0;Q_{\theta})=1
    \quad
    \text{and}
    \quad
    qD(1;Q_{\theta})=0.
\end{align*}
The perfect equality curve for $qZ$ and for $qD$ is equal to 0. An illustration of the example curves $qZ$ and $qD$ for the Weibull distribution with several values of the shape parameter is depicted in Figure \ref{fig:qz_qd_weibull_example}. The concentration indices $qZI(Q_{\theta})$ and $qDI(Q_{\theta})$ are defined as the area under curves $qZ$ and $qD$, respectively, that is
\begin{equation*}\label{e:qZI}
qZI(Q_{\theta})=\int_{0}^{1}qZ(p;Q_{\theta})\,dp,
\end{equation*}
\begin{equation*}\label{e:qDI}
qDI(Q_{\theta})=\int_{0}^{1}qD(p;Q_{\theta})\,dp.
\end{equation*}
Some applications of these indices to the salary data were presented in \cite{JokPia2023} and \cite{Brazauskas2023}.

\begin{figure}[H]
	\includegraphics[angle=0,width=\textwidth]
	{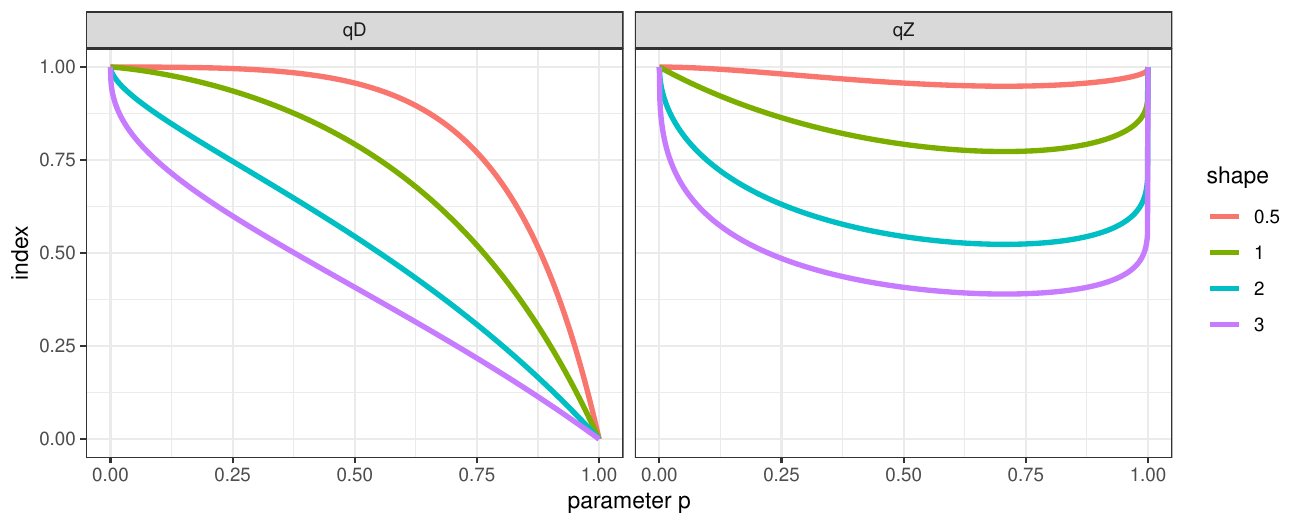}
	\caption[Comparison of the $qD$ (left plot) and $qZ$ (right plot) for Weibull distribution with different values of shape parameter]{Comparison of the $qD$ (left plot) and $qZ$ (right plot) for Weibull distribution with different values of shape parameter}
	\label{fig:qz_qd_weibull_example}
\end{figure}

%%%%%%%%%%%%%%%%%%%%%%%%%%%%%%%%%%%%%%%%%%%%%%%%%%%%%%%%%%%%%%%%%%%%%%%%%%%%%%%%%%%%%%%%%%%%%%%%%%%%%%%%%%%%%%%%%%%%%%%%%%%%%%%%%%%%%%%%%%%%%%%%%%%%%%%%%%%%%%%%%%%%%%%%%%%%%%%%%%%%%%%%%%%%%%%%%%%

The article is organised as follows.
Section \ref{sec:md_est} provides the definition of the proposed minimum distance estimator of the $qZ$ and $qD$ curves.
In Section \ref{sec:properties} the properties of the minimum distance estimators are proved. Section \ref{sec:weibull_distribution} introduces the Weibull distribution, which can be used as an example of a model for which $qZ$ and $qD$ can be estimated with MD estimators.
Section \ref{sec:simulation_study} provides a~comparison of several estimators of the $qZ$ and $qD$ curves in case of data from the Weibull distribution. 
In Section \ref{sec:real_data_analysis} an analysis of a~real data set with the concentration curves is given.
The last section contains a~brief summary of this paper.

\section{Minimum distance estimator}\label{sec:md_est}
One of the possible methods that can be used to estimate the $qZ$ and $qD$ curves is the minimum distance (MD) method. It was introduced by Wolfowitz (see for example \cite{Wolfowitz1957}) and has been widely used since then. 
More information on minimum distance estimation can be found in Basu et al. \cite{Basu2011} and references therein.
Millar described \cite{Millar1984} the minimum distance-type estimators of the parameter $\theta\in\Theta$ of a~distribution given with $F_{\theta}$ as follows. 
Denote by $\mathcal{H}$ the Hilbert space of the real bounded functions on $\mathbb{R}$. Let $F_{\theta},{F}_{n}\in \mathcal{H}$ and let $|\cdot|_{\mathcal{H}}$ be the norm of $\mathcal{H}$. A~minimum distance estimator of $\theta$ based on cdf is $\hat{\theta}$ such that
\begin{equation*}
    \hat{\theta}=\arginf_{\theta\in\Theta}|\xi_n(\theta)|_\mathcal{H},
\end{equation*}
where $\xi_n(\theta)$ is a~stochastic process with time parameter $\theta\in\Theta$ and values in $\mathcal{H}$. The process $\xi_n$ can be defined in various ways as a~difference between a~function of a~true parameter and its empirical counterpart. For example, $\xi_n$ could be
$|F_{\theta}-{F}_{n}|_\mathcal{H}$.
Since the goal here is to estimate the function $qZ(p;Q_{\theta})$, one can define
an MD estimator of $\theta$ based on the $qZ$ curve. This estimator $\hat{\theta}$ is found by minimising the distance between
$qZ_{\theta}=qZ(p;Q_{\theta})$ and the empirical 
curve denoted by
\begin{align}\label{eq:qzn}
    qZ_{n}=qZ(p;Q_{n}),
\end{align}
so it yields a~curve $qZ_{\hat{\theta}}=qZ(p;Q_{\hat{\theta}})$ that should best match the empirical curve. 
$Q_n$ stands here for empirical qf, given by
\begin{equation*}\label{e:Qn}
{Q}_{n}(p;{\bf X}):=\inf\{t:F_{n}(t;{\bf X})\geq p\},
\end{equation*}
where $F_n$ is the empirical cdf. 
We can define a~process
\begin{equation*}\label{eq:xi_n_e}
\xi_n^E(\theta) = qZ_{n}-qZ_{\theta}.
\end{equation*}
The formal definition of the MD estimator of the parameter $\theta$~obtained this way is as follows:
\begin{equation}\label{eq:mde_e_def}
    \hat{\theta}^E_n=\arginf_{\theta\in\Theta}|\xi_n^E(\theta)|_B.
\end{equation}

The most common choice of the norm of $\mathcal{H}$ in the problem of minimum distance estimation is $L_2$ norm, which was used, for example, to define MD estimators of the ODC curve \cite{Hsieh1996}, the ROC curve (see for example \cite{Davydov2009}, \cite{JokTop2019}, \cite{Gneiting2022} or \cite{JokTop2021}) or parameters of the $\alpha$-stable distribution (see \cite{Hopfner1999} or \cite{Fan2009}). 

Similarly, another MD estimator can be obtained basing on difference between $qZ_{\theta}$ and another nonparametric estimator of $qZ$. For example, a~continuous counterpart of $qZ_n$ can be considered.
Jokiel-Rokita and Piątek \cite{JokPia2023} suggested to use to estimate both $qZI$ and $qDI$ a~nonparametric plug-in estimator based on the Hyndman and Fan estimator (HF) \cite{Hyndman1996} of qf ($Q_n^{HF}$) when the finiteness of the expected value of the random variable is assumed. Weibull and Gumbel estimator (WG) \cite{Hyndman1996} of qf was recommended in the opposite case. These estimators are defined as a~linear interpolation between plotting positions $(p_k,X_{(k)})$, where
\begin{equation*}\label{ppHFandWG}
p_{k}^{HF}=\frac{k-1/3}{n+1/3}
\qquad
\text{and}
\qquad
p_{k}^{WG}=\frac{k}{n+1},
\end{equation*}
respectively for HF and WG estimators and $X_{(k)}$ is the $k$-th order statistic. 
In the intervals $[0,p_1]$ and $[p_n,1]$ such estimators have values $X_{(1)}$ and $X_{(n)}$ respectively. In consecutive sections only HF estimator is considered to maintain the clarity of the work, but WG estimator and some similar  continuous counterparts of empirical qf may be used.
Considering $qZ_{n}^{HF}$, defined by plugging-in the HF estimator of qf into the formula for $qZ$, we get
\begin{equation}\label{eq:mdhf_def}
    \hat{\theta}^{HF}=\arginf_{\theta\in\Theta}|\xi_n^{HF}(\theta)|_B,
\end{equation}
where
\begin{equation*}\label{eq:xi_n_hf}
\xi_n^{HF}(\theta)=qZ_{n}^{HF}-qZ_{\theta}.
\end{equation*}

\begin{remark}
    Analogous MD estimators can also be constructed for the $qD$ curve based on its empirical estimator $qD_n$ or the continuous estimator $qD_n^{HF}$. 
\end{remark}

\section{Properties of the estimators}\label{sec:properties}
In this section, the proofs of existence and asymptotic properties of MD estimators of $qZ$ (for a~given $p$) and $qZI$ (and $qD$ and $qDI$) are given. 
To provide the proofs we recall stochastic processes $\xi_n^E(\theta)$ and $\xi_n^{HF}(\theta)$ defined in Section \ref{sec:md_est}.
We denote by $T$ the continuous linear operator from $\Theta$ to $\mathcal{H}$, for which
\begin{equation}\label{eq:diff_cond}
    \xi_n^E(\theta)=\xi_n^E(\theta_0)+T(\theta-\theta_0)+o_P(\theta-\theta_0),
\end{equation}
\begin{equation*}\label{eq:diff_cond_hf}
    \xi_n^{HF}(\theta)=\xi_n^{HF}(\theta_0)+T(\theta-\theta_0)+o_P(\theta-\theta_0),
\end{equation*}
where $\theta_0$ is the true value of the parameter $\theta$. Using Taylor's formula we can show that
\begin{equation}\label{eq:t_def}
T(\theta)=\theta\frac{d}{d\theta_0}\left(qZ_{\theta_0}\right).
\end{equation}
Let 
\begin{equation}\label{e:def_eta}
\eta(\theta_0)=\frac{d}{d\theta_0}\left(qZ_{\theta_0}\right)
\end{equation}
and $\mathcal{B_{\eta}}$ be the linear space spanned by $\eta(\theta_0)$. Let $\pi$ be the orthogonal projection from $\mathcal{H}$ to $\mathcal{B_{\eta}}$. Now we introduce the following lemmas.
\begin{lemma}\label{qz_e_proc_conv}
If $F$ is differentiable at $F^{-1}(t)$, for a~given $t$,~with positive derivative
$Q^{'}(t)=f(F^{-1}(t))$, then
the process $\sqrt{n}\,\xi_n^E(\theta)$ converges in distribution to 
\begin{align*}
    G^{E}(t;\theta)=\left[1-qZ(t)\right]
    \left[ \frac{Q^{'}\left(\frac{t}{2}\right)}{Q\left(\frac{t}{2}\right)}B\left(\frac{t}{2}\right)
    -\frac{Q^{'}\left(\frac{1+t}{2}\right)}{Q\left(\frac{1+t}{2}\right)}B\left(\frac{1+t}{2}\right)\right],
\end{align*}
where $B$ is the standard Brownian bridge.
\end{lemma}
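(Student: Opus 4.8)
The plan is to reduce the statement to the classical weak convergence of the empirical quantile process together with a first-order (delta-method) expansion of the ratio map that defines $qZ$. The single external input I would invoke is the following: under the stated hypothesis that $F$ is differentiable at $F^{-1}(p)$ with positive derivative, the empirical quantile process satisfies $\sqrt{n}\left(Q_n(p)-Q(p)\right)\overset{d}{\longrightarrow}Q'(p)B(p)$, where $B$ is a standard Brownian bridge and $Q'$ is the derivative of the quantile function (this is the classical quantile-process limit; see the references in Section~\ref{sec:intro}). Since both evaluations $Q_n(t/2)$ and $Q_n((1+t)/2)$ are read off the same process $Q_n$, the pair converges jointly, $\sqrt{n}\bigl(Q_n(t/2)-Q(t/2),\,Q_n((1+t)/2)-Q((1+t)/2)\bigr)\overset{d}{\longrightarrow}\bigl(Q'(t/2)B(t/2),\,Q'((1+t)/2)B((1+t)/2)\bigr)$, a bivariate Gaussian vector whose covariance is inherited from the bridge.

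Next I would rewrite the process in a form to which the delta method applies. Abbreviating $a=Q(t/2)$, $b=Q((1+t)/2)$, $a_n=Q_n(t/2)$, $b_n=Q_n((1+t)/2)$, the definition of $qZ$ gives
\begin{equation*}
\xi_n^E(\theta)=qZ_n-qZ_\theta=\frac{a}{b}-\frac{a_n}{b_n}.
\end{equation*}
The map $\phi(x,y)=x/y$ is continuously differentiable at $(a,b)$ provided $b=Q((1+t)/2)>0$, which holds for $t\in(0,1)$ by positivity of $Q$ on the interior of its support; its gradient is $\bigl(1/b,\,-a/b^{2}\bigr)$. A first-order Taylor expansion then yields
\begin{equation*}
\sqrt{n}\,\xi_n^E(\theta)=-\frac{1}{b}\,\sqrt{n}\,(a_n-a)+\frac{a}{b^{2}}\,\sqrt{n}\,(b_n-b)+o_P(1).
\end{equation*}
Substituting the joint limit from the first step and factoring out $a/b=1-qZ(t)$ gives
\begin{equation*}
\sqrt{n}\,\xi_n^E(\theta)\overset{d}{\longrightarrow}\left[1-qZ(t)\right]\left[-\frac{Q'(t/2)}{Q(t/2)}B(t/2)+\frac{Q'((1+t)/2)}{Q((1+t)/2)}B((1+t)/2)\right],
\end{equation*}
which coincides with $G^E(t;\theta)$ once we note that the overall sign is immaterial: $B$ is a centred Gaussian process, so $-B\overset{d}{=}B$ and the two bracketed expressions have the same law. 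A continuous-mapping/Slutsky argument to absorb the $o_P(1)$ remainder then completes the derivation.

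The main obstacle I anticipate is not the algebra but the justification that the linearisation error is genuinely $o_P(n^{-1/2})$, i.e. that the delta method is legitimately applicable here. This is precisely where differentiability of $F$ at $F^{-1}(t)$ with positive derivative is needed — it guarantees that $\sqrt{n}(Q_n-Q)$ is $O_P(1)$ at the two relevant points, so the second-order term in the expansion of $\phi$ is negligible — together with the positivity $Q((1+t)/2)>0$, which makes $\phi$ smooth at the limit point and keeps $b_n$ bounded away from $0$ with probability tending to one. If the statement is read as convergence of the entire process in $t$ rather than pointwise, one would additionally invoke the functional delta method, verifying Hadamard differentiability of the ratio functional and handling the behaviour near the endpoints $p=0,1$, where $Q$ may fail to be bounded away from zero and the bridge degenerates.
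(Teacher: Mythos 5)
Your argument is correct and follows the route the paper itself relies on: the paper gives no proof of this lemma (it defers to Theorem 5.7 of \cite{JokPia2023}), but the joint limit of the quantile process at the two points $t/2$ and $(1+t)/2$, combined with the delta method for the ratio map $(x,y)\mapsto x/y$ and the observation that $-B\overset{d}{=}B$, is exactly the mechanism behind that result (compare the Bahadur expansion from \cite{VanDerVaart1998} quoted in the proof of Lemma~\ref{qz_hf_proc_conv}). The only point worth flagging is that your joint-convergence step actually needs differentiability of $F$ with positive derivative at both $F^{-1}(t/2)$ and $F^{-1}((1+t)/2)$, together with $Q(t/2)>0$, which is how the lemma's hypothesis ``at $F^{-1}(t)$ for a given $t$'' must be read; your closing caveat about the pointwise versus functional interpretation is likewise apt, since the stated hypothesis only supports the pointwise reading.
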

This lemma was proved as Theorem (5.7) in \cite{JokPia2023}.
\begin{lemma}\label{q_hf_est_conv}
Under the assumptions of Lemma \ref{qz_e_proc_conv}, we have
\begin{align*}
    \sup_{t\in[0,1]}|Q_n(t)-Q_n^{HF}(t)|=O_P\left(\frac{1}{n}\right).
\end{align*}
\end{lemma}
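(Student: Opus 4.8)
The plan is to reduce the supremum to a single maximal spacing of the order statistics and then to control that maximal spacing via the quantile transform. First I would record the elementary structural fact that both estimators are built from the same order statistics $X_{(1)}\le\cdots\le X_{(n)}$ and that they coincide at the interpolation nodes. On the boundary cells $[0,p_1^{HF}]$ and $[p_n^{HF},1]$ both functions equal $X_{(1)}$ and $X_{(n)}$ respectively, while an elementary computation gives $p_k^{HF}\in\bigl((k-1)/n,\,k/n\bigr)$ for each $k$, whence $Q_n(p_k^{HF})=Q_n^{HF}(p_k^{HF})=X_{(k)}$. On a single interior cell $[p_k^{HF},p_{k+1}^{HF}]$ the step function $Q_n$ jumps once, at $t=k/n$, from $X_{(k)}$ to $X_{(k+1)}$, while $Q_n^{HF}$ is the affine interpolant of the same two values; hence both functions remain in $[X_{(k)},X_{(k+1)}]$ on that cell and
\begin{align*}
\sup_{t\in[0,1]}\bigl|Q_n(t)-Q_n^{HF}(t)\bigr|\;\le\;\max_{1\le k\le n-1}\bigl(X_{(k+1)}-X_{(k)}\bigr).
\end{align*}
This reduces the assertion to a bound on the largest spacing of the sample.

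Next I would pass to the uniform scale. Writing $X_{(k)}\stackrel{d}{=}Q(U_{(k)})$ jointly, with $U_{(1)}\le\cdots\le U_{(n)}$ the order statistics of an i.i.d.\ $\mathrm{Uniform}(0,1)$ sample, the hypothesis of Lemma~\ref{qz_e_proc_conv} that $Q'$ exists and is positive permits the mean value theorem on each cell,
\begin{align*}
X_{(k+1)}-X_{(k)}\;=\;Q'(\xi_k)\bigl(U_{(k+1)}-U_{(k)}\bigr),\qquad \xi_k\in\bigl(U_{(k)},U_{(k+1)}\bigr).
\end{align*}
Each uniform spacing satisfies $U_{(k+1)}-U_{(k)}\sim\mathrm{Beta}(1,n)$ with mean $1/(n+1)$, so pointwise $X_{(k+1)}-X_{(k)}=O_P(1/n)$. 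To upgrade this to the maximum over $k$ I would invoke the R\'enyi representation $U_{(k+1)}-U_{(k)}\stackrel{d}{=}E_{k+1}\big/\sum_{i=1}^{n+1}E_i$ with i.i.d.\ $E_i\sim\mathrm{Exp}(1)$: the denominator concentrates at $n$ by the law of large numbers, while $Q'(\xi_k)$ is controlled by the local positivity and continuity of $Q'$ along the sample points, so that the maximal spacing is governed by $\tfrac1n\max_{k}Q'(\xi_k)E_{k+1}$.

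The main obstacle is precisely this last, uniform step. The pointwise bound is immediate, but taking the supremum over the whole of $[0,1]$ forces attention to the cells nearest the endpoints, where the order statistics are sparsest and where $Q'$ is largest; the argument must therefore balance the decay of the uniform spacings against the growth of $Q'(\xi_k)$ as the argument approaches $0$ and $1$, using the differentiability hypothesis of Lemma~\ref{qz_e_proc_conv} to keep $Q'(\xi_k)$ under control on the arguments $t/2$ and $(1+t)/2$ that actually enter the $qZ$ curve. Once the maximal spacing is shown to be $O_P(1/n)$, the displayed reduction yields the stated bound $\sup_{t\in[0,1]}|Q_n(t)-Q_n^{HF}(t)|=O_P(1/n)$, and the same argument transfers verbatim to the $qD$ construction.
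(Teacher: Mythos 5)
Your first step — the reduction of the supremum to the maximal spacing — is correct, and it is actually cleaner than the corresponding step in the paper: the verification that $p_k^{HF}\in\bigl((k-1)/n,\,k/n\bigr)$, hence $Q_n(p_k^{HF})=X_{(k)}$, is right, both functions stay in $[X_{(k)},X_{(k+1)}]$ on each cell, and the boundary cells are handled correctly. The gap is in the second half: the maximal spacing is \emph{not} $O_P(1/n)$, so the plan cannot be completed. Already on the uniform scale, $\max_k\bigl(U_{(k+1)}-U_{(k)}\bigr)$ is of exact order $\log n/n$ (classically, $n$ times the maximal uniform spacing, minus $\log n$, converges to a Gumbel law); in your own R\'enyi representation the culprit is visible, since $\max_{k\le n}E_k\sim\log n$ rather than $O_P(1)$. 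So even in the most favourable case of a compactly supported distribution with bounded $Q'$, your scheme yields $O_P(\log n/n)$, not $O_P(1/n)$. For the distributions this paper actually targets (unbounded support, e.g.\ Weibull), the situation is worse: the hypothesis of Lemma \ref{qz_e_proc_conv} is pointwise at a given $t$ and gives no uniform control of $Q'$ near the endpoints, and the top spacing does not even vanish — for the exponential case ($\beta=1$), R\'enyi gives $X_{(n)}-X_{(n-1)}\stackrel{d}{=}E_1\sim\mathrm{Exp}(1)$ for every $n$. The obstacle you flagged at the end of your write-up is thus not a technical step awaiting completion; it is where the argument genuinely breaks.

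Moreover, your reduction is essentially two-sided, which shows the $O_P(1/n)$ target cannot be rescued by a different bound: at $t$ just above $k/n$ the step function equals $X_{(k+1)}$ while the interpolant equals $X_{(k)}+\frac{k+n}{3n}\bigl(X_{(k+1)}-X_{(k)}\bigr)$, so $\sup_{t\in[0,1]}|Q_n(t)-Q_n^{HF}(t)|\ge\frac{1}{3}\max_k\bigl(X_{(k+1)}-X_{(k)}\bigr)$. The full-supremum statement at rate $O_P(1/n)$ is therefore \emph{equivalent} to a maximal-spacing bound that fails in general; only a pointwise version (fixed $t$, where the local spacing is indeed $O_P(1/n)$ under the positivity of $Q'$ at that $t$), or a locally uniform version at rate $O_P(\log n/n)$ on compacts of $(0,1)$, is attainable — and the pointwise version is all that the subsequent process-convergence argument really uses. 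For comparison, the paper's proof takes a different route: it factors the difference as $\sup_k|p_k^E-p_k^{HF}|\cdot\max_m\bigl(X_{(m+1)}-X_{(m)}\bigr)$, with the first factor $O(1/n)$ and the maximal spacing claimed to be $O_P(1)$; but that initial product inequality is inconsistent with the lower bound just displayed, and the $O_P(1)$ claim for the maximal spacing likewise fails at the upper endpoint for unbounded support. So your attempt does not close the gap — but the honest reduction you performed in fact exposes why the lemma, as stated with the supremum over all of $[0,1]$, cannot hold at the claimed rate.
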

\begin{proof}
Let us notice that
\begin{align*}
    \sup_{t\in[0,1]}|Q_n(t)-Q_n^{HF}(t)|\leq
    \sup_{1\leq k,m< n}
    |p_k^E-p_k^{HF}||X_{(m+1)}-X_{(m)}|.
\end{align*}
The left factor is $O\left(\frac{1}{n}\right)$, since
\begin{align*}
\sup_{1\leq k\leq n}|p_k^E-p_k^{HF}|=
\sup_{1\leq k\leq n}\frac{\frac{1}{3}(k+n)}{n(n+\frac{1}{3})}=O_P\left(\frac{1}{n}\right).
\end{align*}
The latter is $O_P\left(1\right)$, because for any $p\in[m/n,(m+1)/n]$ we have
\begin{align*}
    \sup_{1\leq m< n}&|X_{(m+1)}-X{(m)}|\\
    &=\sup_{1\leq m< n, p\in[m/n,(m+1)/n]}|X_{(m+1)}-Q(p+1/n)
    +Q(p+1/n)-Q(p)+Q(p)-X{(m)}|\\
    &\leq
    \sup_{1\leq m< n, p\in[m/n,(m+1)/n]}|X_{(m+1)}-Q(p+1/n)|\\
    &+ \sup_{p\in[0,1]}|Q(p+1/n)-Q(p)|
    +\sup_{1\leq m< n, p\in[m/n,(m+1)/n]}|Q(p)-X{(m)}|.
\end{align*}
The first and third component of the above sum are $O_P(1)$, since the empirical quantile converges in probability to the true quantile. The middle one is $O_P\left(\frac{1}{n}\right)$, from the definition of the derivative of $Q$ in the point $p$. Thus, $\sup_{t\in[0,1]}|Q_n(t)-Q_n^{HF}(t)|$ is $O_P\left(\frac{1}{n}\right)$.
\end{proof}

\begin{lemma}\label{qz_hf_proc_conv}
Under the assumptions of Lemma \ref{qz_e_proc_conv}, we have
\begin{align*}
    \sup_{t\in[0,1]}|qZ_n(t)-qZ_n^{HF}(t)|
    =O_P\left(\frac{1}{n}\right).
\end{align*}
\end{lemma}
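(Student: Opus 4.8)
The plan is to reduce the statement to Lemma \ref{q_hf_est_conv} by exploiting that $qZ$ is a smooth function of the two quantile values from which it is built, together with the fact that the ratio defining $qZ$ is bounded. Writing $a=Q_n(t/2)$, $b=Q_n((1+t)/2)$, $a'=Q_n^{HF}(t/2)$ and $b'=Q_n^{HF}((1+t)/2)$, I first note that $qZ_n(t)-qZ_n^{HF}(t)=\frac{a'}{b'}-\frac{a}{b}$, and I would decompose this difference of ratios as
\begin{equation*}
\frac{a'}{b'}-\frac{a}{b}=\frac{a'-a}{b'}-\frac{a}{b}\cdot\frac{b'-b}{b'}.
\end{equation*}
This isolates the two increments $a'-a$ and $b'-b$, each of which is directly controlled by Lemma \ref{q_hf_est_conv}.

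Second, I would bound the two coefficients in this decomposition. Since $X$ is nonnegative and $Q_n$ is nondecreasing with $t/2\le(1+t)/2$, the empirical ratio satisfies $0\le a/b\le 1$, so the coefficient $a/b$ in the second term is at most $1$. For the denominators, observe that for $t\in[0,1]$ the argument $(1+t)/2$ lies in $[1/2,1]$, so by monotonicity $b'=Q_n^{HF}((1+t)/2)\ge Q_n^{HF}(1/2)$. Combining these two facts gives the pointwise bound
\begin{equation*}
\bigl|qZ_n(t)-qZ_n^{HF}(t)\bigr|\le\frac{|a'-a|+|b'-b|}{Q_n^{HF}(1/2)}.
\end{equation*}

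Third, I would take the supremum over $t\in[0,1]$. Both $\sup_t|a'-a|$ and $\sup_t|b'-b|$ are bounded by $\sup_{s\in[0,1]}|Q_n(s)-Q_n^{HF}(s)|=O_P(1/n)$ from Lemma \ref{q_hf_est_conv}. The empirical median $Q_n^{HF}(1/2)$ converges in probability to $Q(1/2)$, which is strictly positive under the standing assumptions (the distribution being continuous with $Q>0$ on $(0,1]$), so $1/Q_n^{HF}(1/2)=O_P(1)$. Multiplying yields $\sup_t|qZ_n(t)-qZ_n^{HF}(t)|=O_P(1)\cdot O_P(1/n)=O_P(1/n)$, as claimed.

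The main obstacle is the control of the denominator: one must guarantee that $Q_n^{HF}((1+t)/2)$ stays bounded away from zero uniformly in $t$, since otherwise dividing by it could inflate the $O_P(1/n)$ increments. This is exactly why restricting attention to $(1+t)/2\ge 1/2$ is convenient — it reduces the whole question to the positivity of the (empirical) median rather than to the behaviour near $p=0$, where $Q$ may approach zero. The numerator argument $t/2$ can indeed be small, but there the increment $a'-a$ enters only additively and is already $O_P(1/n)$, so no division by a vanishing quantity occurs. The same decomposition applies verbatim to $qD$, with $(1+t)/2$ replaced by $1-t/2\in[1/2,1]$, giving the analogous bound.
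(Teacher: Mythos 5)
Your proposal is correct and follows essentially the same route as the paper: both decompose the difference of the two ratios by adding and subtracting a cross term, bound the resulting coefficients by $1/Q_n(\tfrac12)$ (you use $1/Q_n^{HF}(\tfrac12)$, which is immaterial) using that $(1+t)/2\ge 1/2$, invoke Lemma \ref{q_hf_est_conv} for the increments, and conclude via the convergence in probability of the empirical median to $Q(\tfrac12)>0$. The only cosmetic difference is that the paper justifies the latter step by the Bahadur representation (Corollary 21.5 of van der Vaart) rather than by citing consistency of the sample median directly.
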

\begin{proof}
We have
\begin{align*}
    \sup_{t\in[0,1]}&|qZ_n(t)-qZ_n^{HF}(t)|\\
    &=
    \sup_{t\in[0,1]}
    \left|
    \frac{Q_n\left(\frac{t}{2}\right)}
    {Q_n\left(\frac{t+1}{2}\right)}
    -    \frac{Q_n^{HF}\left(\frac{t}{2}\right)}
    {Q_n\left(\frac{t+1}{2}\right)}
    +    \frac{Q_n^{HF}\left(\frac{t}{2}\right)}
    {Q_n\left(\frac{t+1}{2}\right)}
    -    \frac{Q_n^{HF}\left(\frac{t}{2}\right)}
    {Q_n^{HF}\left(\frac{t+1}{2}\right)}
    \right|\\
    &\leq \sup_{t\in[0,1]}
    \left| \frac{1}{Q_n\left(\frac{t+1}{2}\right)} \right|
    \left|Q_n\left(\frac{t}{2}\right)
    -Q_n^{HF}\left(\frac{t}{2}\right)\right|\\
    &\qquad+ \left| \frac{Q_n^{HF}\left(\frac{t}{2}\right)}{Q_n\left(\frac{t+1}{2}\right)Q_n^{HF}\left(\frac{t+1}{2}\right)} 
    \right|
    \sup_{t\in[0,1]}
    \left|Q_n\left(\frac{t+1}{2}\right)
    -Q_n^{HF}\left(\frac{t+1}{2}\right)\right|\\
    &\leq \frac{2}{Q_n(\frac{1}{2})} \sup_{t\in[0,1]}|Q_n(t)-Q_n^{HF}(t)|.
\end{align*}
Following Corrolary 21.5 in \cite{VanDerVaart1998} we have
\begin{align*}
Q_n(p)=Q(p)-\frac{1}{n}\sum_{i=1}^n\frac{\mathbbm{1}\{X_i\leq Q(p)\}-p}{f(Q(p))} + O_P(n^{-1/2}),
\end{align*}
thus $Q_n(p)=Q(p) + O_P(n^{-1/2})$, so the factor $2/Q_n(\frac{1}{2})$ is $O_P(1)$.
The second factor $\sup_{t\in[0,1]}|Q_n(t)-Q_n^{HF}(t)|$ is $O_P(\frac{1}{n})$, following Lemma \ref{q_hf_est_conv}.
Thus $\sup_{t\in[0,1]}|qZ_n(t)-qZ_n^{HF}(t)|$ is $O_P(\frac{1}{n})$.
\end{proof}

The asymptotic properties of the MD estimators $\xi_n^E(\theta)$ and $\xi_n^{HF}(\theta)$ are given in following theorem.

\begin{theorem}
Let us assume the identifiability, convergence, differentiability and boundedness of $\xi_n^E(\theta)$. We denote by $\hat{\theta}^E$ the MD estimator (\ref{eq:mde_e_def}). Then with probability approaching to 1 with $n\to\infty$, the estimator $\hat{\theta}^E$ exists and is unique. Moreover for $\hat{\theta}^E$ we have
\begin{equation*}\label{eq:main_thm_eq_1}
    \xi_n^E(\hat{\theta}^E_n)= (1-\pi)\circ \xi_n^E(\theta_0) +o_P(n^{-1/2}),
\end{equation*}
\begin{equation*}\label{eq:main_thm_eq_2}
    \hat{\theta}^E_n-\theta_0 = -T^{-1}\circ \pi\circ \xi_n^E(\theta_0) +o_P(n^{-1/2}),
\end{equation*}
\begin{equation*}\label{eq:main_thm_eq_3}
    \sqrt{n}\left(\xi_n^E(\hat{\theta}^E_n) - \xi_n^E(\theta_0)\right)\xrightarrow{d} \pi\circ G^E(t;\theta_0)\,\, \text{in}\, \mathcal{H},
\end{equation*}
\begin{equation}\label{eq:main_thm_eq_4}
    \sqrt{n}\left(\hat{\theta}^E_n-\theta_0\right)
    \xrightarrow{d}-T^{-1}\circ \pi\circ G^E(t;\theta_0)\,\, \text{in}\, \mathbb{R}^d.
\end{equation}

\end{theorem}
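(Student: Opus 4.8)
The plan is to follow the general Hilbert-space theory of minimum distance estimation developed by Millar, for which the four hypotheses (identifiability, convergence, differentiability, boundedness) are precisely the conditions required. The backbone of the argument is that, in a neighbourhood of $\theta_0$, minimising $|\xi_n^E(\theta)|_\mathcal{H}$ reduces, via the differentiability expansion (\ref{eq:diff_cond}), to a linear least-squares (projection) problem in $\mathcal{H}$ whose solution can be read off explicitly, because the range of the operator $T$ is the subspace $\mathcal{B}_\eta$ spanned by $\eta(\theta_0)$ and $\pi$ is the orthogonal projection onto it.

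First I would establish consistency, $\hat{\theta}_n^E \xrightarrow{P} \theta_0$, together with existence and eventual uniqueness. The boundedness and convergence hypotheses yield, through Lemma \ref{qz_e_proc_conv}, that $\xi_n^E(\theta)$ converges (uniformly in probability on $\Theta$) to a limiting criterion $\theta \mapsto |\xi^E(\theta)|_\mathcal{H}$ that is uniquely minimised at $\theta_0$ by identifiability. A standard argmin/continuous-mapping argument then forces every near-minimiser to concentrate about $\theta_0$, while uniqueness for large $n$ follows from the injectivity of $T$ (equivalently $\eta(\theta_0)\neq 0$), which renders the limiting objective locally strictly convex.

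Next comes the linearisation. Restricting to a shrinking neighbourhood of $\theta_0$ and inserting (\ref{eq:diff_cond}), we have $\xi_n^E(\theta) = \xi_n^E(\theta_0) + T(\theta-\theta_0) + o_P(\theta-\theta_0)$, so that minimising $|\xi_n^E(\theta)|_\mathcal{H}^2$ is, up to the remainder, the problem of projecting $-\xi_n^E(\theta_0)$ onto the range $\mathcal{B}_\eta$ of $T$. The minimiser of the linear problem satisfies $T(\hat{\theta}_n^E - \theta_0) = -\pi\circ \xi_n^E(\theta_0)$, whence the residual at the minimiser is $\xi_n^E(\theta_0) + T(\hat{\theta}_n^E-\theta_0) = (1-\pi)\circ \xi_n^E(\theta_0)$, giving the first displayed identity, and inverting $T$ on its range yields $\hat{\theta}_n^E - \theta_0 = -T^{-1}\circ\pi\circ \xi_n^E(\theta_0)$, the second. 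The $o_P(n^{-1/2})$ error arises by first showing $\hat{\theta}_n^E - \theta_0 = O_P(n^{-1/2})$ — from consistency together with $\sqrt{n}\,\xi_n^E(\theta_0) = O_P(1)$ by Lemma \ref{qz_e_proc_conv} — so that the neglected remainder is $o_P(\hat{\theta}_n^E-\theta_0) = o_P(n^{-1/2})$.

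Finally, the two distributional statements follow by the continuous mapping theorem. Lemma \ref{qz_e_proc_conv} gives $\sqrt{n}\,\xi_n^E(\theta_0)\xrightarrow{d} G^E(\cdot;\theta_0)$ in $\mathcal{H}$; since $1-\pi$, $\pi$, and $T^{-1}$ are continuous linear maps, applying them to the two algebraic identities above transports this weak limit to $\pi\circ G^E(t;\theta_0)$ and $-T^{-1}\circ\pi\circ G^E(t;\theta_0)$ respectively, the $o_P(n^{-1/2})$ terms vanishing after multiplication by $\sqrt{n}$. I expect the main obstacle to be the rigorous control of the linearisation remainder uniformly over the relevant $n^{-1/2}$-neighbourhood of $\theta_0$ — that is, upgrading the pointwise expansion (\ref{eq:diff_cond}) to a uniform one and verifying that the minimiser of the exact criterion agrees with that of the linearised criterion up to order $o_P(n^{-1/2})$; once this is in hand, the projection algebra and the continuous-mapping step are routine.
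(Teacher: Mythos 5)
Your proposal is correct in substance, but it takes a different route from the paper: you essentially reconstruct, from scratch, the proof of the general minimum distance theorem that the paper simply cites. The paper's own proof is a few lines long --- it invokes Theorem (3.6) of Millar \cite{Millar1984} and merely verifies its four hypotheses for the specific process $\xi_n^E(\theta)=qZ_n-qZ_\theta$: identifiability holds because $\xi_n^E(\theta)-\xi_n^E(\theta_0)=qZ_\theta-qZ_{\theta_0}$ is deterministic and vanishes only at $\theta_0$; convergence of $\sqrt{n}\,\xi_n^E(\theta_0)$ to $G^E$ is Lemma \ref{qz_e_proc_conv}; differentiability is the existence of $T$ in (\ref{eq:t_def}) satisfying (\ref{eq:diff_cond}); and boundedness is immediate since $qZ$ takes values in $[0,1]$. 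Your argument instead opens up the black box: the consistency/argmax step, the reduction of the quadratic criterion to an orthogonal projection onto $\mathcal{B}_\eta=\mathrm{range}(T)$, the identities $\xi_n^E(\hat\theta_n^E)=(1-\pi)\circ\xi_n^E(\theta_0)+o_P(n^{-1/2})$ and $\hat\theta_n^E-\theta_0=-T^{-1}\circ\pi\circ\xi_n^E(\theta_0)+o_P(n^{-1/2})$, and the continuous mapping step --- this is precisely the skeleton of Millar's proof, so the algebra and the weak-convergence transfer are sound. What the citation buys the paper is exactly the point you flag as your ``main obstacle'': the uniform (over $n^{-1/2}$-neighbourhoods of $\theta_0$) control of the linearisation remainder and the justification that the minimiser of the exact criterion tracks that of the linearised one to order $o_P(n^{-1/2})$; in Millar's framework this is packaged into the differentiability hypothesis, whereas in your self-contained version it remains an unproved step. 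What your route buys in exchange is transparency --- it makes visible where each hypothesis is actually used (identifiability for consistency, injectivity of $T$ for uniqueness, the expansion for the projection algebra) --- but as written it is a proof sketch, not a complete proof, until that uniform remainder bound is supplied or the appeal to Millar is made explicit.
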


\begin{proof}
The proof is based on Theorem (3.6) from Millar's work \cite{Millar1984}.
In order to use this theorem, we need to check the four assumptions about the process $\xi_n^E(\theta)$. 
The first one, identifiability, is fulfilled since $\xi_n^E(\theta)-\xi_n^E(\theta_0)=qZ_{\theta}(t)-qZ_{\theta_0}(t)$ which is not random and does not depend on $n$. The convergence was shown in Lemma \ref{qz_e_proc_conv}. The differentiability is given by the existence of $T$ defined as in (\ref{eq:t_def}) which satisfies (\ref{eq:diff_cond}). Finally, $\xi_n^E(\theta)$ is obviously bounded. All the assumptions are satisfied, thus the theorem is proved. 
\end{proof}

\begin{remark}
Following Lemma \ref{qz_hf_proc_conv} the difference between $qZ_n$ and $qZ_n^{HF}$ is of $O_P\left(\frac{1}{n}\right)$, thus
all these properties also hold for $\hat{\theta}^{HF}$ defined in (\ref{eq:mdhf_def}).
\end{remark}

\begin{corollary}
The MD estimator $\hat{\theta}_n$ from the previous theorem is asymptotically normal with mean $0$ and variance given in the proof.
\end{corollary}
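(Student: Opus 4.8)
The plan is to read off the asymptotic law directly from the weak-convergence statement (\ref{eq:main_thm_eq_4}) of the preceding theorem, namely
\[
\sqrt{n}\bigl(\hat{\theta}^E_n-\theta_0\bigr)\xrightarrow{d}-T^{-1}\circ\pi\circ G^E(\,\cdot\,;\theta_0),
\]
and to show that the right-hand side is a Gaussian vector, after which its mean and covariance are identified. First I would note that, for each fixed $t$, the limit $G^E(t;\theta_0)$ from Lemma \ref{qz_e_proc_conv} is a fixed linear combination of the two values $B(t/2)$ and $B((1+t)/2)$ of the standard Brownian bridge. Since $B$ is a Gaussian process, $G^E(\,\cdot\,;\theta_0)$ is a Gaussian element of $\mathcal{H}$ (all its finite-dimensional marginals are jointly normal). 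The orthogonal projection $\pi$ onto $\mathcal{B}_\eta$ and the inverse $T^{-1}$ on the finite-dimensional range of $T$ are continuous linear maps, and continuous linear images of Gaussian elements remain Gaussian; hence $-T^{-1}\circ\pi\circ G^E(\,\cdot\,;\theta_0)$ is a (multivariate) normal vector in $\mathbb{R}^d$, which is the asserted asymptotic normality.

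Next I would compute the mean. Because the standard Brownian bridge satisfies $\mathbb{E}\,B(s)=0$ for every $s\in[0,1]$, linearity of the expectation and of the expression defining $G^E$ give $\mathbb{E}\,G^E(t;\theta_0)=0$ for all $t$. The maps $\pi$ and $-T^{-1}$ are linear, so they preserve the vanishing mean, and the limiting vector has mean $0$.

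For the covariance I would start from the Brownian-bridge kernel $\operatorname{Cov}\bigl(B(s),B(u)\bigr)=s\wedge u-su$ and expand the bilinear form coming from the explicit formula in Lemma \ref{qz_e_proc_conv}. This produces the covariance kernel $K(s,u)=\operatorname{Cov}\bigl(G^E(s;\theta_0),G^E(u;\theta_0)\bigr)$, a sum of four terms with coefficients $[1-qZ]\,Q'/Q$ evaluated at the arguments $s/2,(1+s)/2,u/2,(1+u)/2$. The covariance of the limiting vector is then obtained by pushing $K$ through $-T^{-1}\circ\pi$. In the scalar-parameter case, writing $\eta=\eta(\theta_0)$, one has $\pi\,G^E=\langle G^E,\eta\rangle\,\eta/\lvert\eta\rvert_{\mathcal{H}}^2$ and $T^{-1}(c\,\eta)=c$, so the asymptotic variance equals
\[
\frac{1}{\lvert\eta\rvert_{\mathcal{H}}^4}\iint_{[0,1]^2}K(s,u)\,\eta(s)\,\eta(u)\,ds\,du;
\]
in the multivariate case the same quadratic form, with $\eta$ vector-valued and $T$ in matrix form, yields the covariance matrix. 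I expect the only genuine labour to be this last kernel computation, which is mechanical Gaussian algebra, while the conceptual points — Gaussianity via linearity and the vanishing mean via $\mathbb{E}\,B=0$ — are immediate; the identical argument run through Lemma \ref{qz_hf_proc_conv} then covers $\hat{\theta}^{HF}$ and the $qD$ analogues.
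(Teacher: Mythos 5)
Your proposal is correct and follows essentially the same route as the paper: both read the limit law off equation (\ref{eq:main_thm_eq_4}), obtain Gaussianity and the zero mean from the fact that $G^{E}$ is a continuous linear image of the (centred, Gaussian) Brownian bridge, and express the asymptotic variance as a double integral of the covariance kernel of $G^{E}$ against $\eta$. You are somewhat more explicit than the paper on the first two points, which the paper leaves implicit and delegates to Millar's general result. The one substantive difference is the normalisation of the variance: writing $C=\langle\eta,\eta\rangle$ and $A=\iint\eta(s)\eta(t)R(s,t)\,ds\,dt$, your formula $\lvert\eta\rvert_{\mathcal{H}}^{-4}\iint K(s,u)\eta(s)\eta(u)\,ds\,du$ is the sandwich form $C^{-1}AC^{-1}$, which is what the representation $-T^{-1}\circ\pi\circ G^{E}$ with $\pi(g)=\langle g,\eta\rangle\,\eta/\lvert\eta\rvert_{\mathcal{H}}^{2}$ and $T^{-1}(c\,\eta)=c$ actually yields; the paper instead quotes $\sigma_E^2=C^{-1}AC$ and reduces it to $A$ in the scalar case, i.e.\ it omits the factor $C^{-2}$. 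Unless $\eta$ is normalised so that $\lvert\eta\rvert_{\mathcal{H}}=1$, your version is the one consistent with the preceding theorem, so this is a point in your favour rather than a gap.
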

\begin{proof}
As we can see in equation (\ref{eq:main_thm_eq_4}), the asymptotic distribution of $\hat{\theta}$ can also be obtained. Following Millar (see equation (2.20) in \cite{Millar1984}) we have the variance of the MD estimator $\hat{\theta}_n^E$ given by 
\begin{equation*}\label{eq:var_thety}
\sigma_E^2=C^{-1}AC, 
\end{equation*}
where $C$ and $A$ are matrices with entries
\begin{equation*}\label{eq:def_c_and_a_dd}
    C_{ij}=\int_0^1 \eta_i(t) \eta_j(t) \,dt, \qquad A_{ij}=\int_0^1\int_0^1\eta_i(s)\eta_j(t)R(s,t)\,ds\,dt,
\end{equation*}
which reduces in one-dimensional case to
\begin{equation*}\label{eq:def_c_and_a_1d}
    \sigma_E^2=A=\int_0^1\int_0^1\eta(s)\eta(t)R(s,t)\,ds\,dt.
\end{equation*}
The function $\eta$ is defined as in (\ref{e:def_eta}) and $R(s, t)$ denotes here the~covariance function of $G^E(t;\theta)$, which is equal to

\begin{align*}\label{eq:R_s_t}
R(s,t)& = a(t)a(s)\left( \frac{1}{2}\min\{t,s\}-\frac{ts}{4}\right)
-b(t)a(s)\left(\frac{s}{2}-\frac{(1+t)s}{4} \right) \\
&+b(t)b(s)\left( \frac{1}{2}(1+\min\{t,s\})-\frac{(1+t)(1+s)}{4} \right)
-a(t)b(s)\left(\frac{t}{2}-\frac{(1+s)t}{4} \right),
\end{align*}
where
\begin{equation*}\label{eq:ab_def}
a(t) = \left[1-qZ(t)\right] \frac{Q^{'}\left(\frac{t}{2}\right)}{Q\left(\frac{t}{2}\right)}
\qquad 
\text{and}
\qquad 
b(t) = \left[1-qZ(t)\right] \frac{Q^{'}\left(\frac{1+t}{2}\right)}{Q\left(\frac{1+t}{2}\right)}.
\end{equation*}
\end{proof}

Denote the plug-in estimators of $qZ$ curve $\widehat{qZ}^{E}(p;{\bf X})=qZ(p;Q_{\hat{\theta}^{E}})$ and $\widehat{qZ}^{HF}(p;{\bf X})=qZ(p;Q_{\hat{\theta}^{HF}})$ and plug-in estimators of $qZI$.

\begin{corollary}
    The estimators $\widehat{qZ}^{E}(p;{\bf X})$ and $\widehat{qZ}^{HF}(p;{\bf X})$ are consistent estimators of $qZ(p;Q)$ for every $p\in[0,1]$. 
\end{corollary}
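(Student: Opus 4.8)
The plan is to reduce consistency of the plug-in curve estimators to consistency of the underlying parameter estimators $\hat{\theta}^E_n$ and $\hat{\theta}^{HF}$, and then to pass through the map $\theta\mapsto qZ(p;Q_\theta)$ by the continuous mapping theorem.

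First I would record that $\hat{\theta}^E_n$ is a consistent estimator of $\theta_0$. This is immediate from the preceding theorem: the limit law (\ref{eq:main_thm_eq_4}) shows that $\sqrt{n}\,(\hat{\theta}^E_n-\theta_0)$ converges in distribution to a tight limit, whence $\hat{\theta}^E_n-\theta_0=O_P(n^{-1/2})=o_P(1)$, i.e. $\hat{\theta}^E_n\xrightarrow{P}\theta_0$. Equivalently, the linear expansion of $\hat{\theta}^E_n-\theta_0$ in the theorem, combined with $\xi_n^E(\theta_0)=O_P(n^{-1/2})$ (a consequence of Lemma \ref{qz_e_proc_conv}, since $\sqrt{n}\,\xi_n^E(\theta_0)$ converges in distribution) and the boundedness of $T^{-1}\circ\pi$ on the finite-dimensional parameter space, gives the same conclusion. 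By the Remark following the theorem, the $O_P(1/n)$ gap between $qZ_n$ and $qZ_n^{HF}$ established in Lemma \ref{qz_hf_proc_conv} transfers every asymptotic property, so $\hat{\theta}^{HF}\xrightarrow{P}\theta_0$ as well.

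Next I would establish continuity of the plug-in map. For each fixed $p\in(0,1)$ the function $\theta\mapsto qZ(p;Q_\theta)=1-Q_\theta(p/2)/Q_\theta((1+p)/2)$ is continuous at $\theta_0$: the existence of the derivative $\eta(\theta_0)=\frac{d}{d\theta_0}qZ_{\theta_0}$ in (\ref{e:def_eta}), used already to define $T$ in (\ref{eq:t_def}), forces $\theta\mapsto qZ_\theta$ to be continuous near $\theta_0$, while the positivity of $Q'$ assumed in Lemma \ref{qz_e_proc_conv} keeps the denominator $Q_{\theta_0}((1+p)/2)$ bounded away from $0$. At the endpoints $p=0$ and $p=1$ the value $qZ(p;Q_\theta)\equiv 1$ does not depend on $\theta$, so consistency there is trivial.

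Finally I would conclude by the continuous mapping theorem. Combining $\hat{\theta}^E_n\xrightarrow{P}\theta_0$ with the continuity of $\theta\mapsto qZ(p;Q_\theta)$ at $\theta_0$ gives
\[
\widehat{qZ}^{E}(p;{\bf X})=qZ(p;Q_{\hat{\theta}^E_n})\xrightarrow{P}qZ(p;Q_{\theta_0})=qZ(p;Q),
\]
and the identical argument with $\hat{\theta}^{HF}$ in place of $\hat{\theta}^E_n$ settles $\widehat{qZ}^{HF}(p;{\bf X})$. The only delicate point is the joint requirement of continuity and non-degeneracy of the denominator in the plug-in map; I expect this to be the main, though mild, obstacle, and for concrete families such as the Weibull model of Section \ref{sec:weibull_distribution} it can be verified directly.
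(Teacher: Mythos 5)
Your argument is correct and is essentially the paper's own proof, spelled out in more detail: the paper likewise derives consistency of the plug-in estimators from consistency of $\hat{\theta}^E_n$ and $\hat{\theta}^{HF}$ together with continuity of $\theta\mapsto qZ(p;Q_\theta)$, invoking the delta method where you invoke the continuous mapping theorem. Your additional care about the endpoints $p=0,1$ and the non-vanishing denominator is a welcome but inessential elaboration of the same route.
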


\begin{proof}
    This is a consequence of applying the delta method (see for example Chapter 3 in \cite{VanDerVaart1998}), which can be done, since $qZ(p;Q_{\hat{\theta}})$ is a~continuous function of $\theta$ for each $p\in[0,1]$.
\end{proof}

\begin{corollary}
    The estimators $\widehat{qZI}^{E}({\bf X})$ and $\widehat{qZI}^{HF}({\bf X})$ are consistent estimators of $qZI(Q)$. 
\end{corollary}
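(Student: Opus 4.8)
The plan is to reduce the statement to the consistency of the underlying parameter estimator combined with continuity of the deterministic map $\theta\mapsto qZI(Q_\theta)$. First I would record that $\hat\theta^E_n$ and $\hat\theta^{HF}$ are themselves consistent for $\theta_0$. This follows directly from the asymptotic normality proved in the preceding theorem: relation (\ref{eq:main_thm_eq_4}) gives $\sqrt{n}(\hat\theta^E_n-\theta_0)\xrightarrow{d} -T^{-1}\circ\pi\circ G^E(\cdot;\theta_0)$, a finite limit, so $\sqrt{n}(\hat\theta^E_n-\theta_0)=O_P(1)$ and hence $\hat\theta^E_n-\theta_0=n^{-1/2}\,O_P(1)=o_P(1)$, i.e. $\hat\theta^E_n\xrightarrow{P}\theta_0$. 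By the remark following the theorem the same conclusion holds for $\hat\theta^{HF}$.

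Next I would show that $\theta\mapsto qZI(Q_\theta)=\int_0^1 qZ(p;Q_\theta)\,dp$ is continuous on $\Theta$. For each fixed $p$ the integrand $qZ(p;Q_\theta)$ is continuous in $\theta$ — this is precisely the continuity already invoked in the previous corollary on the curve estimators — and it is uniformly bounded, since $Q_\theta$ is nondecreasing and therefore $0\le Q_\theta(p/2)/Q_\theta((1+p)/2)\le 1$, giving $0\le qZ(p;Q_\theta)\le 1$ for all $p\in[0,1]$ and all $\theta$. By the bounded convergence theorem, pointwise-in-$p$ continuity of the integrand together with this uniform bound transfers to continuity of the integral, so $qZI(Q_\theta)$ is a continuous function of $\theta$.

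Finally I would combine the two ingredients via the continuous mapping theorem: since $\hat\theta^E_n\xrightarrow{P}\theta_0$ and $qZI(Q_{\cdot})$ is continuous, we obtain $\widehat{qZI}^{E}({\bf X})=qZI(Q_{\hat\theta^E_n})\xrightarrow{P}qZI(Q_{\theta_0})=qZI(Q)$, which is the asserted consistency; the identical argument applied to $\hat\theta^{HF}$ yields consistency of $\widehat{qZI}^{HF}({\bf X})$, and the same reasoning covers the $qDI$ counterparts. The step I expect to be the main obstacle is the middle one, namely upgrading continuity of the curve at each $p$ to continuity of the index. The decisive tool there is the uniform bound $qZ\in[0,1]$, which legitimises the bounded convergence theorem and removes any integrability concern near the endpoints $p=0,1$; absent such a uniform dominating bound one would instead have to control the behaviour of $Q_\theta(p/2)$ as $p\to 0$ and establish uniform integrability by hand, which would be substantially more delicate.
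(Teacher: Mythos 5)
Your proof is correct, but it takes a genuinely different route from the paper. The paper works at the level of the curve: it writes $\widehat{qZI}^{E}({\bf X})-qZI(Q)=\int_{0}^{1}[\widehat{qZ}^{E}(p;{\bf X})-qZ(p;Q)]\,dp$, bounds this by $\sup_{p}|\widehat{qZ}^{E}(p;{\bf X})-qZ(p;Q)|$, and appeals to the consistency of the curve estimator established in the preceding corollary. You instead work at the level of the parameter: consistency of $\hat\theta^{E}_n$ extracted from the $\sqrt{n}$-rate in (\ref{eq:main_thm_eq_4}), continuity of $\theta\mapsto qZI(Q_\theta)$ via bounded convergence using the uniform bound $0\le qZ(p;Q_\theta)\le 1$ (which follows from monotonicity and nonnegativity of $Q_\theta$), and then the continuous mapping theorem. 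Your version is arguably tighter on one point: the paper's final step infers that the supremum over $p$ tends to zero from consistency \emph{for each} $p$, which strictly speaking requires uniformity that is asserted rather than proved; your argument never needs uniform-in-$p$ control because the bounded convergence theorem only requires pointwise continuity in $\theta$ plus the dominating bound. What the paper's approach buys in exchange is generality: it treats the index as a functional of the curve estimator and so would apply verbatim to any (uniformly) consistent estimator of the curve, parametric or not, whereas your argument is tied to the plug-in structure $qZI(Q_{\hat\theta})$.
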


\begin{proof}
From the definition of $\widehat{qZI}^{E}({\bf X})$ and $qZI(Q),$ we have
\begin{equation*}
\widehat{qZI}^{E}({\bf X})-qZI(Q)=\int_{0}^{1}
\left[\widehat{qZ}^{E}(p;{\bf X})-qZ(p;Q)\right]\,dp.
\end{equation*}
We have though
\begin{equation*}
\int_{0}^{1}\left[\widehat{qZ}^{E}(p;{\bf X})-qZ(p;Q)\right]\,dp
\leq
\sup_{p\in(0,1)}|\widehat{qZ}^{E}(p;{\bf X})-qZ(p;Q)|.
\end{equation*}
The right-hand side of the inequality above converges to 0 when $n$ tends to infinity, since the estimator $\widehat{qZ}^{E}(p;{\bf X})$ is a~consistent estimator of $qZ$ for each $p \in [0,1]$. Thus $\widehat{qZI}^{E}({\bf X})$ converges in probability to $qZI(Q)$.
The proof for $\widehat{qZ}^{HF}$ follows the same pattern.
\end{proof}

\begin{remark}
All the properties shown in this section for $qZ$ also hold for $qD$ and can be proved analogously.
\end{remark}

\begin{remark}
 MD estimators of parameter $\theta$ based on empirical $qZ$ can also be used to construct a plug-in estimator of $qD$ and vice versa. 
\end{remark}

\section{Weibull distribution} \label{sec:weibull_distribution}
The two-parameter Weibull distribution $\mathcal{W}(\beta,\sigma)$ is defined by
the probability density function
\begin{align*}
f(x) = \frac{\beta}{\sigma}\left(\frac{x}{\sigma}\right)^{\beta-1}
\exp\Big[-\left(\frac{x}{\sigma}\right)^{\beta}\Big], \quad x\geq0, \beta>0, \sigma>0,
\end{align*}
where $\sigma$ is a~scale parameter and $\beta$ is a~shape parameter \cite{Weibull1939}.
\subsection{Concentration curves and measures for the Weibull distribution}
This distribution is utilised for example to fit precipitation data \cite{Chen2022}, travel times \cite{Al-Deek2007} or time intervals of the first passage process of foreign exchange rates \cite{Sazuka2007, Chakraborti2015}. The Gini index sometimes appears as a~measure used to analyse inequalities in such data \cite{Duan2021, Sazuka2007, Lee2019}.
Lorenz curve is also applied in reliability problems \cite{Chandra1981}, where the Weibull distribution often appears.

The qf of the Weibull distribution is given by 
\begin{align*}
Q(p)=\sigma\left[-\log(1-p)\right]
^{1/\beta}.
\end{align*}
Thus, the curves $qZ$ and $qD$ for the Weibull distribution are
\begin{equation*}\label{e:qZ_Weibull}
qZ(p;\beta)=1-\left[\frac{\log(1-p/2)}{\log((1-p)/2)}\right]^
{1/\beta}
\end{equation*}
and
\begin{equation*}\label{e:qD_weibull}
qD(p;\beta)=1-\left[\frac{\log(1-p/2)}{\log(p/2)}\right]
^{1/\beta},
\end{equation*}
respectively, hence they are scale invariant and depend only on the shape parameter $\beta$.

Figure \ref{fig:three_indices} depicts the relation between the shape parameter of the Weibull distribution and the values of three indices, namely the already mentioned $qZI$ and $qDI$, as well as the Gini index ($GI$). It can be deduced from this plot that $GI$ is more volatile then other two indices when the shape parameter $\beta$ is small (smaller than 2), however $qZI$ and $qDI$ might tend to be more susceptible (than $GI$) to the changes of $\beta$ when 
it is greater than 2.

\begin{figure}[H]
	\includegraphics[angle=0,width=\textwidth]
	{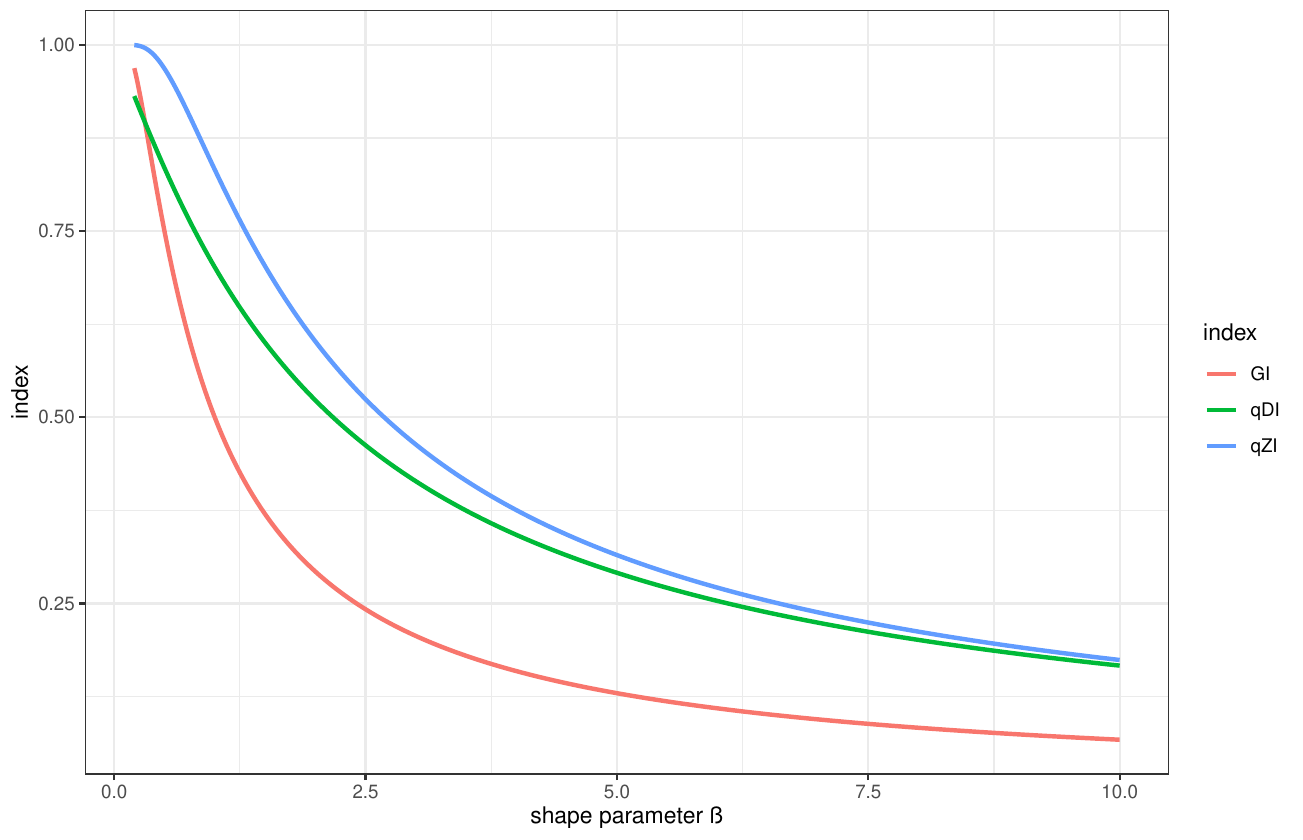}
	\caption[Comparison of the $GI$, $qZI$ and $qDI$ for different values of shape parameter of the Weibull distribution]{Comparison of the $GI$, $qZI$ and $qDI$ for different values of shape parameter of Weibull distribution}
	\label{fig:three_indices}
\end{figure}

\subsection{Plug-in estimators based on estimators of the shape parameter}\label{ml_est}
A plug-in estimators of $qZ$ and $qZI$, having $\mathbf{X} = (X_1, \ldots , X_n)$ which is a~random sample from Weibull distribution, are defined by
\begin{equation*}\label{e:qZ_Weibull_hat}
\widehat{qZ}(p;{\bf X})=1-\left[\frac{\log(1-p/2)}{\log(1/2-p/2)}\right]^
{1/\hat{\beta}}
\end{equation*}
and
\begin{equation*}\label{e:qZI_Weibull_hat}
\widehat{qZI}({\bf X})=\int_{0}^{1}\widehat{qZ}(p;{\bf X})\,dp,
\end{equation*}
where $\hat{\beta}$ is an estimator of the shape parameter $\beta$ of the Weibull distribution.

Although the MD estimators of the parameter $\beta$ and aforementioned concentration curves and measures have desired properties, some standard estimators of $\beta$ might lead to plug-in estimators of curves and measures having lower MSE and MISE. 
The popular methods of estimation of parameter $\beta$ were compared in many works in terms of their bias and MSE, but there are not much papers about the problem of estimation of curves depending on the parameters of the Weibull distribution, which can be estimated with a~plug-in estimator.
Thus in the simulation study in Section \ref{sec:simulation_study} we will check MSE and MISE of plug-in estimators of some functions of parameter $\beta$, namely $qZ$, $qD$, $qZI$ and $qDI$.

Among over a~dozen of methods applied to the problem of estimating $qZ$ and $qD$ and the corresponding indices, we decided to present here the result for three of them, further denoted by ML, MML, and BCML.
Results, including other methods, are available in the supplementary materials given in Section \ref{sec:supplement}.

\subsubsection{ML estimator}
For $X_{1},\ldots,X_{n}$
from the Weibull distribution,
the maximum likelihood (ML) estimator (see for example section 3.1 in \cite{Gebizlioglu2011}) $\beta^{ML}$ of $\beta$ 
is a~solution to the equation
\begin{equation}\label{e:betaMLEeq}
\sum_{i=1}^{n}X_{i}^{\beta}
+\frac{1}{n}\Big[\sum_{i=1}^{n}\log(X_{i}^{\beta})\Big]\sum_{i=1}^{n}X_{i}^{\beta}
-\sum_{i=1}^{n}\Big[X_{i}^{\beta}\log(X_{i}^{\beta})\Big] = 0.
\end{equation}
Maximum likelihood estimator for the Weibull distribution in asymptotically normal (see for example Section 5.5 in \cite{VanDerVaart1998}).
For each $p\in[0,1]$, the plug-in estimator $\widehat{qZ}^{ML}(p;{\bf X})$ based on $\hat{\beta}^{ML}$ is asymptotically normal (thus also consistent). This is again due to the delta method. Analogously, the plug-in estimator $\widehat{qZI}^{ML}(p;{\bf X})$ is asymptotically normal and consistent, since $qZI$ is a~continuous function of $\beta$.
\subsubsection{Modified ML estimator (MML)}
Many modifications of the ML estimator of the parameters of Weibull distribution were described in the literature.
Here, we use a~modification obtained recently by replacing equation (\ref{e:betaMLEeq}) with an unbiased estimating equation (see Section 3.1 in \cite{JokPia2022}). This leads to the estimator $\hat{\beta}^{MML}$ with a~bias significantly smaller than $\hat{\beta}^{ML}$. The asymptotic normality of $\hat{\beta}^{MML}$ can be shown analogously to that of $\hat{\beta}^{ML}$ , thus the asymptotic normality holds also for $\widehat{qZ}^{MML}(p;{\bf X})$, for a~given $p$, and $\widehat{qZI}^{MML}({\bf X})$.
\subsubsection{Bias-corrected ML estimator (BCML)}
Makalic and Schmidt \cite{Makalic2023} proposed in 2023 a~modification of the ML estimator of the shape parameter motivated by the need to reduce its bias. They proved that the bias of the regular ML estimator of the shape parameter is as follows,
\begin{align*}
    \bias(\hat{\beta}^{ML})=\beta\left(
    \frac{18(\pi^2-2\zeta(3))}{n\pi^4}\right) + O(n^{-2})
    \sim\beta\left(\frac{1.3795}{n}\right),
\end{align*}
where $n$ is the size of a~sample and $\zeta$ is the Riemann zeta function.
Their estimator is constructed simply by subtracting from MLE its bias, namely 
\begin{align*}
\hat{\beta}^{BCML}=
\hat{\beta}^{ML}\left(1-\frac{1.3795}{n}\right).
\end{align*}
Obviously, it has a significantly smaller bias and MSE than regular ML. Its performance is comparable to that of two other modified ML estimators described in that paper. 
Although the simulation study presented by the authors shows that an MLP method proposed by Shen and Yang \cite{Shen2015} is slightly better than BCML for complete samples, here we discuss only BCML, since MLP requires bootstrapping, which increases the complexity of the estimation process.
Since $\hat{\beta}^{BCML}$ is a $\hat{\beta}^{ML}$ shifted by an expression converging to 0 (when $n$ tends to $\infty$), it preserves the asymptotic normality of $\hat{\beta}^{ML}$, thus $\widehat{qZ}^{BCML}(p;{\bf X})$ and $\widehat{qZI}^{BCML}({\bf X})$ are asymptotically normal as well.
\subsubsection{Other estimators} In a~recent article by Jokiel-Rokita and Piątek \cite{JokPia2022} several estimators of the shape parameter are described and used to define plug-in estimators of extreme quantiles (e.g. $q=.99)$. All of these estimators can be used to define plug-in estimators of $qZ$, $qD$, $qZI$, and $qDI$. The performance of estimators obtained in such a way was compared with that of the aforementioned plug-in estimators. 
Section \ref{sec:simulation_study} contains only the most relevant results. Full comparison, including 11 plug-in estimators 
is available in the suplementary materials. For more insight on various estimators of parameters of the Weibull distribution, see for example \cite{Gebizlioglu2011}, \cite{Teimouri2011}, \cite{JokPia2022}, \cite{Yang2022}, \cite{Kim2023} or \cite{Makalic2023} and references therein.

\subsection{Starting point for MD estimators}
Since the MD method searches for a~value which minimises some expression, it requires a~good starting point to ensure a quick convergence to the minimum. Estimators of shape parameter (ML, MML and BCML), mentioned in the previous section, can be a~good starting points, as they have low bias and MSE. Their drawback is that they also require some time to be found, since they are defined as a~solution to an equation. For this reason, in order to save time, an estimator of the shape parameter given with a~closed-form expression can be used as a~starting point. An example of such estimator, based on quantiles of order .31 and .63, was presented by Seki and Yokoyama (see Section 2 in \cite{Seki1993}). We denote this estimator as PE.
Another one is the L-Moment estimator (LM), see for example \cite{Teimouri2011}. 
Two other estimators, based on nonparametric estimators of Gini index and denoted by G1 and G2, were proposed by Jokiel-Rokita and Piątek (see Section 3.2 in \cite{JokPia2022}), one of them being equal to the LM estimator. Another such estimator (TMML), a~modification of ML, was proposed by Gebizlioglu et al. (see  Section 3.5 in \cite{Gebizlioglu2011}). 

The advantage of PE, LM, G1, G2, and TMML, over the ML and its modifications is that they can also be applied to data containing zeros. The additional advantage of TMML is also low bias and MSE since it gives values almost equal to those of ML and MML.

\section{Simulation study}\label{sec:simulation_study}
Estimators of the curves $qZ$ and $qD$ for the Weibull distribution will be compared in this section, namely three plug-in estimators (using $\hat{\beta}^{ML}$, $\hat{\beta}^{MML}$ and $\hat{\beta}^{BCML}$), two MD estimators based on empirical (E) and HF estimators of $qZ$ (or $qD$) and a~nonparametric estimator (HF). 
The performance of the estimators presented in the previous section was compared in a~simulation study. Four values of the shape parameter $\beta$ were considered, namely $\beta=0.5$, resulting in a~heavy-tailed distribution, and $\beta\in\{1,2,3\}$, resulting in lighter tails.
For each case, 10,000 samples were produced. $L^2$ norm was used as a~criterion to assess the estimators of the curves. In the following figures and tables, the abbreviation HF stands for a~nonparametric plug-in estimator defined in (\ref{eq:qzn}), MDE and MDHF are plug-in estimators based on MD estimators $\hat{\beta}^{E}$ and $\hat{\beta}^{HF}$ described in Section \ref{sec:md_est}, while ML, MML, BCML stand for plug-in estimators based on estimators $\hat{\beta}^{ML}$, $\hat{\beta}^{MML}$ and $\hat{\beta}^{BCML}$  given in Section \ref{ml_est}.
The experiment was carried out using the R programming language, including \textit{optim} function for finding minimum of the expressions and \textit{integrate} function for computing integrals.

\subsection{Curves estimation}
We use 
mean integrated square error (MISE), defined for $qZ$ as
\begin{align*}
    \text{MISE}(\hat{\beta})=\mathbb{E}
    \left(
    \int_0^1 
    \left(qZ(p;\hat{\beta})-qZ(p;\beta)\right)^2\,dp
    \right),
\end{align*}
to assess the performance of the estimators.  
Tables \ref{tab:mise_qz} and \ref{tab:mise_qd} show the mean values of the error. There, it can be observed that MML and BCML are slightly better than ML in all cases and also that the difference between the errors of MD-based and ML-based methods is much smaller for larger $\beta$. The BCML estimator has the lowest MSE for $\beta\in\{0.5,1,2\}$ and ML for $\beta=3$ for both curves and both sample sizes considered in these tables. 

\begin{table}
\centering
\caption{MISE (multiplied by 1,000) of the estimators of $qZ$ for sample sizes $n=30$ and $n=100$ and shape parameter varying between $0.5$ and $3$}
\begin{tabular}{c|cccc|cccc}
  & \multicolumn{4}{c|}{$30$} & \multicolumn{4}{c}{$100$} \\
\cline{2-9}
 & {0.5} & {1} & {2} & {3} & {0.5} & {1} & {2} & {3} \\
\hline
  HF & 0.861 & 4.399 & 7.155 & 6.705 & 0.215 & 1.358 & 2.204 & 2.086 \\ 
  MDE & 0.735 & 2.851 & 3.633 & 2.946 & 0.147 & 0.764 & 1.031 & 0.852 \\ 
  MDHF & 0.635 & 2.596 & 3.429 & 2.822 & 0.140 & 0.735 & 1.005 & 0.832 \\ 
  ML & 0.435 & 2.152 & 2.912 & 2.274 & 0.093 & 0.554 & 0.811 & 0.659 \\ 
  MML & 0.375 & 1.968 & 2.759 & \color{red}2.216 & 0.088 & 0.530 & 0.792 & \color{red}0.654 \\ 
  BCML & \color{red}0.321 & \color{red}1.857 & \color{red}2.745 & 2.268 & \color{red}0.084 & \color{red}0.519 & \color{red}0.791 & 0.660 \\ 
\end{tabular}
\label{tab:mise_qz}
\end{table}

\subsection{Indices estimation}
Figure \ref{fig:index_boxplot} shows boxplots of the estimators of the indices. Similarly as in the case of curve estimation, MML and BCML outperform other estimators, especially in the case of heavier tails.  
Tables \ref{tab:mse_qzi} and \ref{tab:mse_qdi} compare the estimators of indices in terms of their MSE. 
We can also notice that MML and BCML have lower MSE than ML in all presented cases. 
BCML performs best for smaller values of $\beta$~and MML for larger ones.

\begin{table}
\centering
\caption{MISE (multiplied by 1,000) of the estimators of $qD$ for sample sizes $n=30$ and $n=100$ and shape parameter varying between $0.5$ and $3$}
\begin{tabular}{c|cccc|cccc}
  & \multicolumn{4}{c|}{$30$} & \multicolumn{4}{c}{$100$} \\
\cline{2-9}
 & {0.5} & {1} & {2} & {3} & {0.5} & {1} & {2} & {3} \\
\hline
  HF & 5.914 & 5.483 & 4.834 & 4.261 & 2.033 & 1.853 & 1.534 & 1.320 \\ 
  MDE & 4.192 & 3.675 & 3.018 & 2.408 & 1.286 & 1.108 & 0.882 & 0.707 \\ 
  MDHF & 4.037 & 3.546 & 2.900 & 2.325 & 1.269 & 1.099 & 0.867 & 0.696 \\ 
  ML & 0.989 & 1.604 & 1.982 & 1.732 & 0.259 & 0.428 & 0.544 & 0.495 \\ 
  MML & 0.918 & 1.492 & 1.865 & \color{red}1.671 & 0.253 & 0.408 & 0.530 & \color{red}0.489 \\ 
  BCML & \color{red}0.880 & \color{red}1.450 & \color{red}1.843 & 1.688 & \color{red}0.251 & \color{red}0.404 & \color{red}0.529 & 0.492 \\  
\end{tabular}
\label{tab:mise_qd}
\end{table}

\begin{table}
\centering
\caption{MSE (multiplied by 1,000) of the estimators of $qZI$ for sample sizes $n=30$ and $n=100$ and shape parameter varying between $0.5$ and $3$}
\begin{tabular}{c|cccc|cccc}
  & \multicolumn{4}{c|}{$30$} & \multicolumn{4}{c}{$100$} \\
\cline{2-9}
 & {0.5} & {1} & {2} & {3} & {0.5} & {1} & {2} & {3} \\
\hline
  HF & 0.540 & 2.444 & 3.362 & 2.727 & 0.112 & 0.675 & 0.992 & 0.818 \\ 
  MDE & 0.631 & 2.714 & 3.613 & 2.927 & 0.123 & 0.722 & 1.025 & 0.847 \\ 
  MDHF & 0.544 & 2.466 & 3.408 & 2.805 & 0.117 & 0.694 & 0.998 & 0.827 \\ 
  ML & 0.371 & 2.046 & 2.896 & 2.260 & 0.077 & 0.523 & 0.806 & 0.656 \\ 
  MML & 0.318 & 1.866 & 2.742 & \color{red}2.203 & 0.073 & 0.500 & 0.787 & \color{red}0.650 \\ 
  BCML & \color{red}0.270 & \color{red}1.753 & \color{red}2.725 & 2.257 & \color{red}0.070 & \color{red}0.489 & \color{red}0.786 & 0.656 \\  
\end{tabular}
\label{tab:mse_qzi}
\end{table}

\begin{table}
\centering
\caption{MSE (multiplied by 1,000) of the estimators of $qDI$ for sample sizes $n=30$ and $n=100$ and shape parameter varying between $0.5$ and $3$}
\begin{tabular}{c|cccc|cccc}
  & \multicolumn{4}{c|}{$30$} & \multicolumn{4}{c}{$100$} \\
\cline{2-9}
 & {0.5} & {1} & {2} & {3} & {0.5} & {1} & {2} & {3} \\
\hline
  HF & 2.108 & 2.687 & 2.643 & 2.113 & 0.617 & 0.828 & 0.798 & 0.642 \\ 
  MDE & 2.522 & 2.981 & 2.662 & 2.060 & 0.741 & 0.895 & 0.780 & 0.607 \\ 
  MDHF & 2.389 & 2.856 & 2.559 & 1.996 & 0.728 & 0.886 & 0.767 & 0.598 \\ 
  ML & 0.596 & 1.319 & 1.748 & 1.478 & 0.150 & 0.346 & 0.481 & 0.425 \\ 
  MML & 0.544 & 1.219 & 1.647 & \color{red}1.431 & 0.146 & 0.330 & 0.469 & \color{red}0.420 \\ 
  BCML & \color{red}0.509 & \color{red}1.171 & \color{red}1.628 & 1.452 & \color{red}0.143 & \color{red}0.326 & \color{red}0.468 & 0.423 \\  
\end{tabular}
\label{tab:mse_qdi}
\end{table}

\begin{figure}[H]
\includegraphics[angle=0,width=\textwidth]
{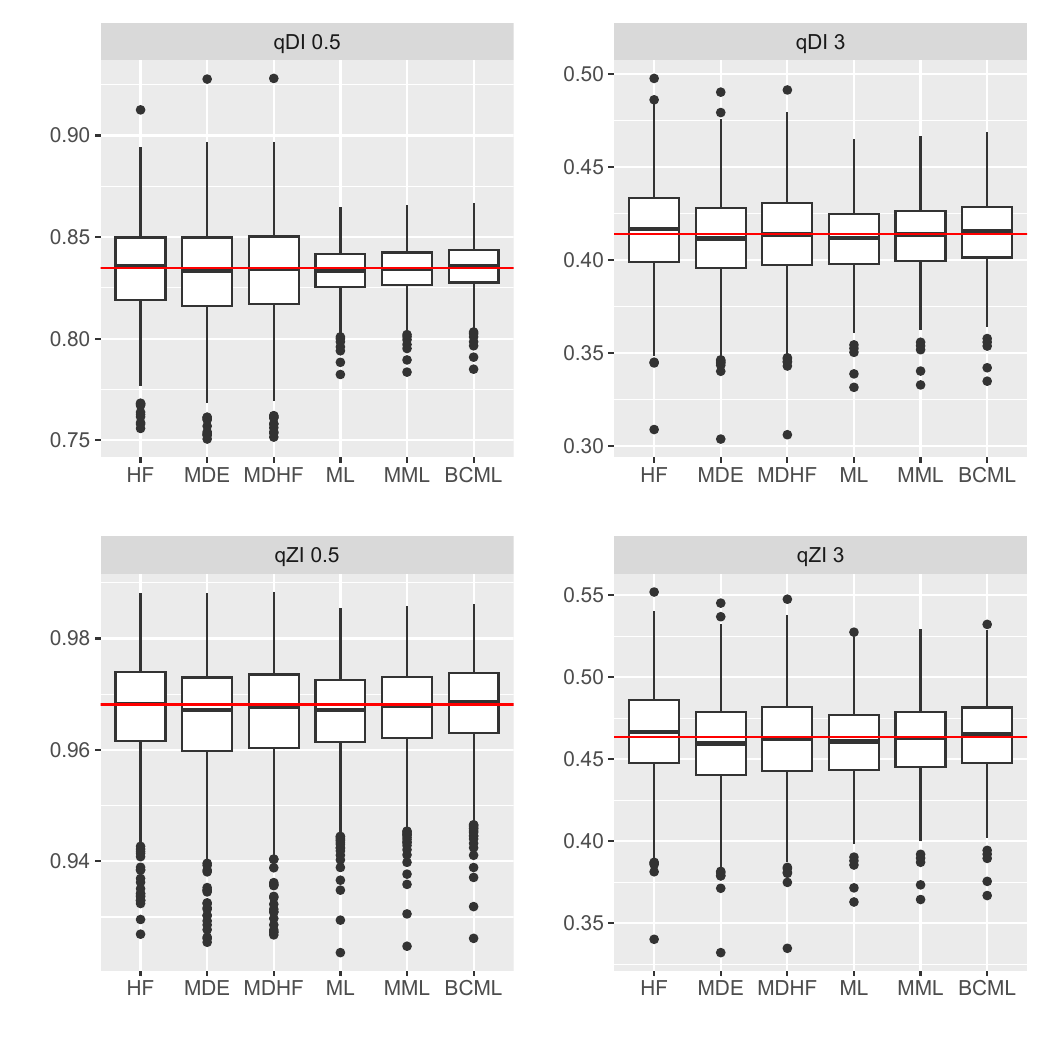}
\caption[Boxplots depicting values of the estimators of $qDI$ (upper row) and $qZI$ (lower row) for sample of size $n=100$ from Weibull distribution with shape parameter $\beta=0.5$ (left column) and $\beta=3$ (right column) compared to the true value given with the red line]{Boxplots depicting values of the estimators of $qDI$ (upper row) and $qZI$ (lower row) for sample of size $n=100$ from Weibull distribution with shape parameter $\beta=0.5$ (left column) and $\beta=3$ (right column) compared to the true value given with the red line}
\label{fig:index_boxplot}
\end{figure}

\section{Real data analysis}\label{sec:real_data_analysis}
We analyse here a~data set containing numbers of days survived by guinea pigs infected with \textit{tubercle bacilli} and a~control group. These data appeared originally in \cite{Bjerkedal1960} and were also considered by Doksum \cite{Doksum1974} and by Chandra and Singpurwalla \cite{Chandra1981}. 
According to the Anderson-Darling test (implemented in \textit{goftest} package in R), there is no reason to reject the hypothesis that survival times of both groups have a~Weibull distribution (with $p$ value $0.7381$ and $0.3702$, respectively). 
The test was performed for a~null hypothesis that the distribution is Weibull with unknown parameters.

The values of the MML estimator of $\beta$ are $1.539$ for the control group and $2.201$ for the treated group.
Figures \ref{fig:gp_qZ} and \ref{fig:gp_qD}  show the $qZ$ and $qD$ curves, respectively, for both groups constructed with the MML estimator (thick smooth lines) together with their empirical counterparts (thin step lines). 
The $x$ axis represents $p$~ being an argument of $qZ$ or $qD$, and the values on the $y$ axis stand for the values of the concentration curves. 
A~comparison of the estimators of the indices for both groups can be found in Table \ref{tab:rda_indices}.

A~Lorenz curve was applied by Chandra and Singpurwalla \cite{Chandra1981} to these data to compare the survival pattern of these two groups. The conclusion was that the infected group had a~less sparse distribution of survival times. 
A~similar conclusion can be drawn from the application of curves $qZ$ and $qD$. Both quantile concentration curves curves show that the control group has a~significantly more sparse survival time.
The empirical Lorenz curves cross near the point of $p=.8$ and on the interval $[0.8,1]$ they are almost equal. 
Empirical $qZ$ and $qD$ curves behave in a different way in the neighbourhood of 1; empirical $qZ$ curves also cross, but close to $p=.9$.
Empirical $qD$ curves do not cross, but approach to each other on $[0.8,1]$.
From this comparison we can see that each of this concentration curves show the differences in the survival pattern of the two groups in a different way, thus each of them might be useful to detect different types of inequalities in the populations.

\begin{minipage}[b]{0.45\textwidth}
\begin{figure}[H]
\includegraphics[angle=0,width=\textwidth]
{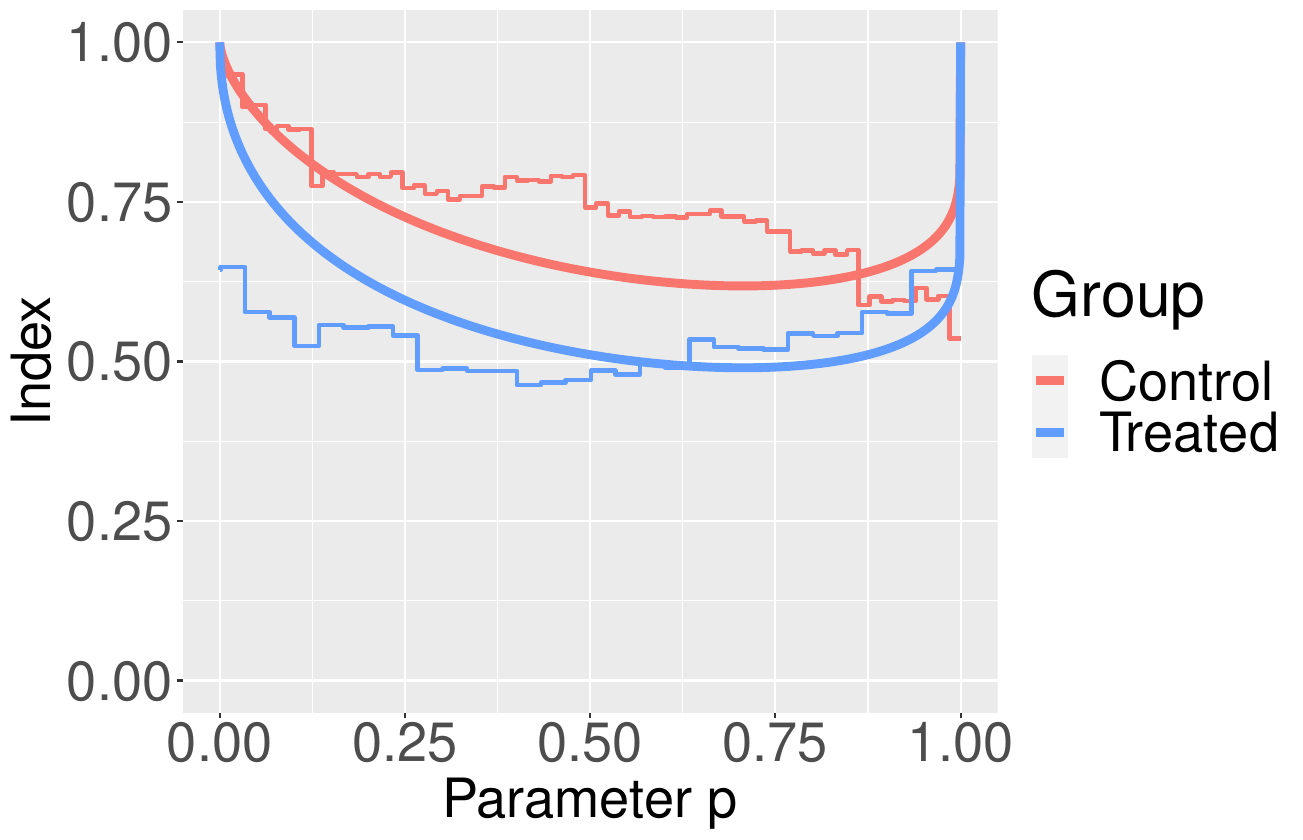}
\caption[The $qZ$ of control group (red) and infected group (blue)]{The $qZ$ of control group (red) and infected group (blue)}
\label{fig:gp_qZ}
\end{figure}
\end{minipage}
\hfill
\begin{minipage}[b]{0.45\textwidth}
\begin{figure}[H]
\includegraphics[angle=0,width=\textwidth]
{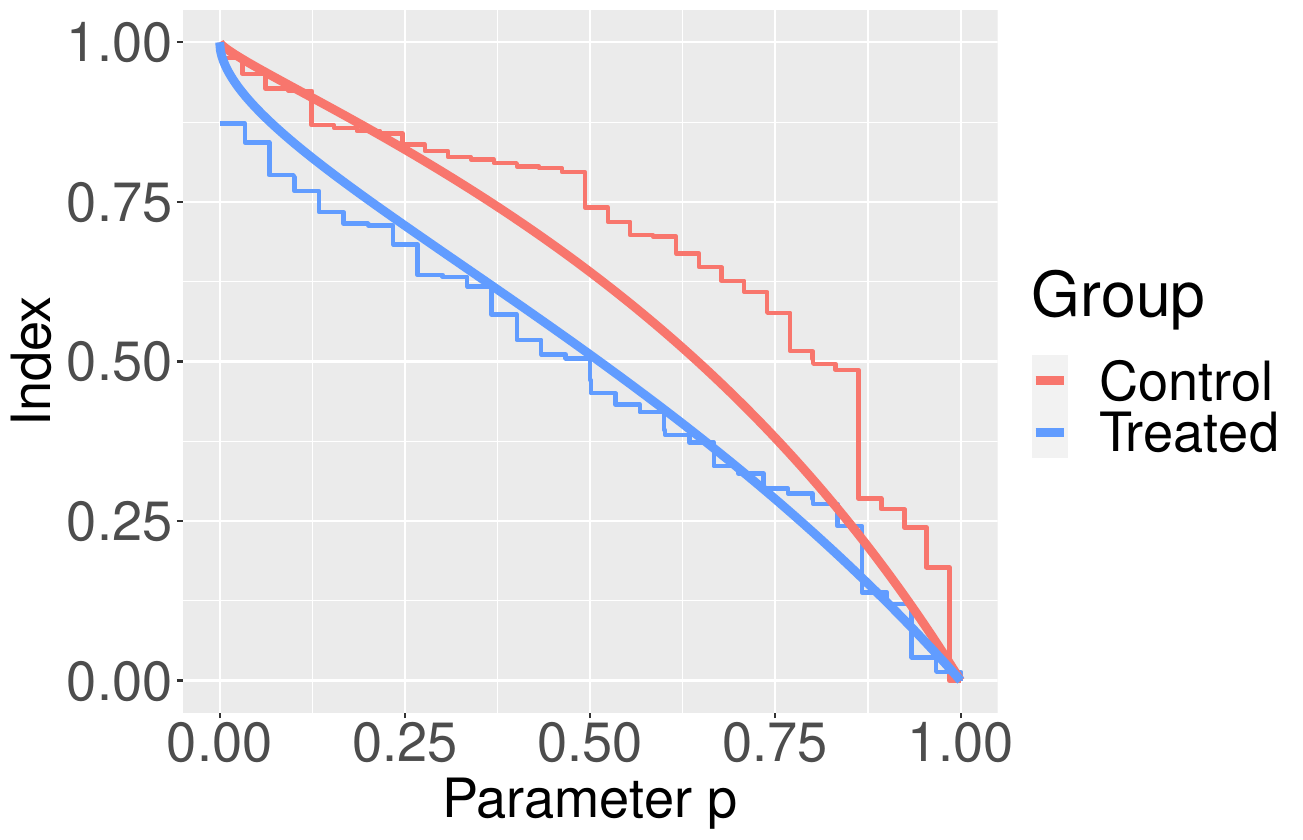}
\caption[The $qD$ of control group (red) and infected group (blue)]{The $qD$ of control group (red) and infected group (blue)}\label{fig:gp_qD}
\end{figure}
\end{minipage}

\begin{table}
\centering
\caption{Estimates of the shape parameter, $qZI$ and $qDI$ obtained with MML estimator}
\begin{tabular}{c|c|c|c}
 & $\hat{\beta}^{MML}$& $\widehat{qZI}^{MML}$ & $\widehat{qDI}^{MML}$  \\
\hline
Control group & 1.539 & 0.6941 & 0.5683  \\
Treated group & 2.201  & 0.5935 & 0.4968  \\
\end{tabular}
\label{tab:rda_indices}
\end{table}

\section{Conclusions}\label{sec:conclusions}
In this article two novel parametric estimators of concentration curves $qZ$ and $qD$, based on the concept of minimum distance (MD) estimation, were introduced. They were compared to some other parametric estimators in case of Weibull distribution. 
It can be concluded that the class of MD estimators performs better than the nonparametric estimators, but worse than the plug-in estimators using several popular estimators of shape parameter. 
The experiment presented in Section \ref{sec:simulation_study} refers to the problem of estimating functions of parameters. 
In this particular case we could see that plug-in estimators based on standard estimators of the parameter give better estimators of the functions of parameters than estimators which are constructed directly to estimate that functions.
This might be the case also in many similar problems when the object of estimation is a~function of a~parameter of a~distribution of a~random variable, not the parameter itself. 

It can also be noticed that the MML estimator of the shape parameter of the Weibull distribution is slightly better than ML in all the cases considered in this paper.
The BCML estimator of both indices has the lowest MSE for small values of $\beta$ (0.5, 1 and 2) and the MML has the lowest MSE for large values of $\beta$ (3).
The advantage of MD estimators is their existence, thus they can be applied in cases when ML or other popular estimators might not exist or might be difficult to compute.
Moreover, they may be recommended in the case when a~good fit to the empirical concentration curve is desired. 
However, according to the simulation study, the curve matching to the empirical curve does not lead to the best estimation, comparing to the true curve.

\vspace{1cm}
\noindent
\textbf{Acknowledgement.} The author would like to thank Alicja Jokiel-Rokita for careful reading of the manuscript and useful remarks and Rafał Topolnicki for valuable comments on the manuscript and on the simulation study.

% \bibliographystyle{acm}
% \bibliography{qvpe}

\newpage
\section{Supplementary material}\label{sec:supplement}

\subsection{Introduction}\label{sec:intro}
In this supplementary material, the additional six methods are used to obtain plug-in estimators of $qZ$ and $qD$ curves and plug-in estimators of $qZI$ and $qDI$ indices. This methods use following estimators of the shape parameter $\beta$ of the Weibull distribution:
\begin{itemize}
    \item Moment estimator (ME),
    \item L-moments estimator (LM),
    \item Tiku's modified maximum likelihood estimator (TMML),
    \item Least squares estimator (LS),
    \item Weighted least squares estimator (WLS),
    \item Gini index-based estimator (G1).
\end{itemize}
Detailed description of ME, LM, TMML, LS and WLS can be found in \cite{Gebizlioglu2011}, while G1 estimator is described in Section 3.2 in \cite{JokPia2022} as $\hat{\beta}_{\hat{G}}$.
Moreover, an additional sample size 500 is considered and tables comparing bias of estimators of indices are included.

The estimators MML and BCML were chosen for a~comparison in the main part of the work, since they were performing best (in terms of MSE and MISE) among the plug-in estimators. ML was added to the comparison because it is the state-of-the-art method used for estimation in variety of problems.

Analogously as in Section 5 of the main work, four values of the shape parameter $\beta$ were considered, namely $\beta\in\{0.5,1,2,3\}$.
For each case, 10,000 samples were generated. Mean integrated square error (MISE) was used as a~criterion to assess the estimators of the curves. The estimators of indices were compared in terms of MSE and bias.

\subsection{Results of the extended simulation study}
Twelve following tables contain comparison of performance of estimators of $qZ$, $qD$, $qZI$ and $qDI$. 
Tables \ref{tab:mseqz}, \ref{tab:mseqd}, \ref{tab:mseqz500} and \ref{tab:mseqd500} contain comparison of MSE of estimators.
Tables \ref{tab:miseqz}, \ref{tab:miseqd}, \ref{tab:miseqz500} and \ref{tab:miseqd500} contain comparison of MISE of estimators.
Tables \ref{tab:biasqz}, \ref{tab:biasqd}, \ref{tab:biasqz500} and \ref{tab:biasqd500} contain comparison of bias of estimators. 
Tables \ref{tab:mseqz}--\ref{tab:biasqd} contain comparison of estimators in case of sample sizes ($n$) 30 and 100, while \ref{tab:mseqz500}--\ref{tab:biasqd500} refer to the case when $n=500$.

\begin{table}[H]
\centering
\caption{MSE (multiplied by 1,000) of the estimators of $qZI$ for sample sizes $n=30$ and $n=100$ and shape parameter varying between $0.5$ and $3$} 
\label{tab:mseqz}
\begin{tabular}{c|cccc|cccc}
	& \multicolumn{4}{c|}{$30$} & \multicolumn{4}{c}{$100$} \\
	\cline{2-9}
  \hline
 & 0.5 & 1 & 2 & 3 & 0.5 & 1 & 2 & 3 \\ 
  \hline
HF & 0.540 & 2.444 & 3.362 & 2.727 & 0.112 & 0.675 & 0.992 & 0.818 \\ 
  MDE & 0.631 & 2.714 & 3.613 & 2.927 & 0.123 & 0.722 & 1.025 & 0.847 \\ 
  MDHF & 0.544 & 2.466 & 3.408 & 2.805 & 0.117 & 0.694 & 0.998 & 0.827 \\ 
  ML & 0.371 & 2.046 & 2.896 & 2.260 & 0.077 & 0.523 & 0.806 & 0.656 \\ 
  MML & 0.318 & 1.866 & 2.742 & 2.203 & 0.073 & 0.500 & 0.787 & 0.650 \\ 
  BCML & 0.270 & 1.753 & 2.725 & 2.257 & 0.070 & 0.489 & 0.786 & 0.656 \\ 
  ME & 1.472 & 2.497 & 2.725 & 2.245 & 0.395 & 0.760 & 0.792 & 0.667 \\ 
  LM & 0.513 & 2.007 & 2.839 & 2.384 & 0.136 & 0.563 & 0.823 & 0.703 \\ 
  TMML & 0.367 & 2.022 & 2.865 & 2.264 & 0.078 & 0.517 & 0.799 & 0.656 \\ 
  LS & 0.448 & 2.571 & 3.954 & 3.382 & 0.105 & 0.737 & 1.157 & 0.970 \\ 
  WLS & 0.394 & 2.257 & 3.375 & 2.857 & 0.088 & 0.605 & 0.944 & 0.785 \\ 
  G1 & 0.776 & 2.372 & 3.023 & 2.449 & 0.159 & 0.598 & 0.839 & 0.708 \\ 
\end{tabular}
\end{table}

\begin{table}[H]
\centering
\caption{MSE (multiplied by 1,000) of the estimators of $qDI$ for sample sizes $n=30$ and $n=100$ and shape parameter varying between $0.5$ and $3$} 
\label{tab:mseqd}
\begin{tabular}{c|cccc|cccc}
	& \multicolumn{4}{c|}{$30$} & \multicolumn{4}{c}{$100$} \\
	\cline{2-9}
	\hline
	& 0.5 & 1 & 2 & 3 & 0.5 & 1 & 2 & 3 \\ 
	\hline
HF & 2.108 & 2.687 & 2.643 & 2.113 & 0.617 & 0.828 & 0.798 & 0.642 \\ 
  MDE & 2.522 & 2.981 & 2.662 & 2.060 & 0.741 & 0.895 & 0.780 & 0.607 \\ 
  MDHF & 2.389 & 2.856 & 2.559 & 1.996 & 0.728 & 0.886 & 0.767 & 0.598 \\ 
  ML & 0.596 & 1.319 & 1.748 & 1.478 & 0.150 & 0.346 & 0.481 & 0.425 \\ 
  MML & 0.544 & 1.219 & 1.647 & 1.431 & 0.146 & 0.330 & 0.469 & 0.420 \\ 
  BCML & 0.509 & 1.171 & 1.628 & 1.452 & 0.143 & 0.326 & 0.468 & 0.423 \\ 
  ME & 1.905 & 1.612 & 1.636 & 1.458 & 0.638 & 0.502 & 0.472 & 0.431 \\ 
  LM & 0.865 & 1.329 & 1.701 & 1.543 & 0.267 & 0.374 & 0.490 & 0.454 \\ 
  TMML & 0.594 & 1.304 & 1.727 & 1.479 & 0.150 & 0.340 & 0.477 & 0.425 \\ 
  LS & 0.788 & 1.726 & 2.366 & 2.175 & 0.213 & 0.491 & 0.688 & 0.625 \\ 
  WLS & 0.681 & 1.499 & 2.022 & 1.845 & 0.175 & 0.401 & 0.562 & 0.507 \\ 
  G1 & 1.106 & 1.516 & 1.825 & 1.605 & 0.288 & 0.392 & 0.501 & 0.459 \\ 
\end{tabular}
\end{table}

\begin{table}[H]
\centering
\caption{MISE (multiplied by 1,000) of the estimators of $qZ$ for sample sizes $n=30$ and $n=100$ and shape parameter varying between $0.5$ and $3$} 
\label{tab:miseqz}
\begin{tabular}{c|cccc|cccc}
	& \multicolumn{4}{c|}{$30$} & \multicolumn{4}{c}{$100$} \\
	\cline{2-9}
	\hline
	& 0.5 & 1 & 2 & 3 & 0.5 & 1 & 2 & 3 \\ 
	\hline
HF & 0.861 & 4.399 & 7.155 & 6.705 & 0.215 & 1.358 & 2.204 & 2.086 \\ 
  MDE & 0.735 & 2.851 & 3.633 & 2.946 & 0.147 & 0.764 & 1.031 & 0.852 \\ 
  MDHF & 0.635 & 2.596 & 3.429 & 2.822 & 0.140 & 0.735 & 1.005 & 0.832 \\ 
  ML & 0.435 & 2.152 & 2.912 & 2.274 & 0.093 & 0.554 & 0.811 & 0.659 \\ 
  MML & 0.375 & 1.968 & 2.759 & 2.216 & 0.088 & 0.530 & 0.792 & 0.654 \\ 
  BCML & 0.321 & 1.857 & 2.745 & 2.268 & 0.084 & 0.519 & 0.791 & 0.660 \\ 
  ME & 1.696 & 2.625 & 2.742 & 2.259 & 0.465 & 0.805 & 0.797 & 0.670 \\ 
  LM & 0.603 & 2.122 & 2.858 & 2.397 & 0.162 & 0.597 & 0.828 & 0.707 \\ 
  TMML & 0.431 & 2.127 & 2.881 & 2.278 & 0.093 & 0.547 & 0.804 & 0.659 \\ 
  LS & 0.526 & 2.722 & 3.985 & 3.400 & 0.126 & 0.782 & 1.164 & 0.975 \\ 
  WLS & 0.463 & 2.386 & 3.399 & 2.873 & 0.106 & 0.642 & 0.950 & 0.789 \\ 
  G1 & 0.901 & 2.490 & 3.040 & 2.465 & 0.189 & 0.633 & 0.844 & 0.712 \\ 
\end{tabular}
\end{table}

\begin{table}[H]
\centering
\caption{MISE (multiplied by 1,000) of the estimators of $qD$ for sample sizes $n=30$ and $n=100$ and shape parameter varying between $0.5$ and $3$} 
\label{tab:miseqd}
\begin{tabular}{c|cccc|cccc}
	& \multicolumn{4}{c|}{$30$} & \multicolumn{4}{c}{$100$} \\
	\cline{2-9}
	\hline
	& 0.5 & 1 & 2 & 3 & 0.5 & 1 & 2 & 3 \\ 
	\hline
HF & 5.914 & 5.483 & 4.834 & 4.261 & 2.033 & 1.853 & 1.534 & 1.320 \\ 
  MDE & 4.192 & 3.675 & 3.018 & 2.408 & 1.286 & 1.108 & 0.882 & 0.707 \\ 
  MDHF & 4.037 & 3.546 & 2.900 & 2.325 & 1.269 & 1.099 & 0.867 & 0.696 \\ 
  ML & 0.989 & 1.604 & 1.982 & 1.732 & 0.259 & 0.428 & 0.544 & 0.495 \\ 
  MML & 0.918 & 1.492 & 1.865 & 1.671 & 0.253 & 0.408 & 0.530 & 0.489 \\ 
  BCML & 0.880 & 1.450 & 1.843 & 1.688 & 0.251 & 0.404 & 0.529 & 0.492 \\ 
  ME & 2.960 & 1.960 & 1.853 & 1.703 & 1.057 & 0.619 & 0.533 & 0.502 \\ 
  LM & 1.463 & 1.637 & 1.926 & 1.798 & 0.463 & 0.463 & 0.554 & 0.528 \\ 
  TMML & 0.986 & 1.585 & 1.957 & 1.732 & 0.260 & 0.419 & 0.539 & 0.495 \\ 
  LS & 1.349 & 2.138 & 2.680 & 2.527 & 0.372 & 0.609 & 0.778 & 0.726 \\ 
  WLS & 1.158 & 1.848 & 2.291 & 2.148 & 0.305 & 0.496 & 0.635 & 0.590 \\ 
  G1 & 1.774 & 1.833 & 2.069 & 1.883 & 0.490 & 0.482 & 0.566 & 0.535 \\ 
\end{tabular}
\end{table}

\begin{table}[H]
\centering
\caption{Bias (multiplied by 1,000) of the estimators of $qZ$ for sample sizes $n=30$ and $n=100$ and shape parameter varying between $0.5$ and $3$} 
\label{tab:biasqz}
\begin{tabular}{c|cccc|cccc}
	& \multicolumn{4}{c|}{$30$} & \multicolumn{4}{c}{$100$} \\
	\cline{2-9}
	\hline
	& 0.5 & 1 & 2 & 3 & 0.5 & 1 & 2 & 3 \\ 
	\hline
HF & -8.562 & -12.138 & -9.065 & -5.756 & -2.458 & -3.766 & -2.198 & -1.177 \\ 
  MDE & -9.912 & -15.680 & -14.410 & -11.931 & -2.860 & -5.160 & -4.407 & -3.776 \\ 
  MDHF & -7.702 & -10.462 & -7.921 & -5.786 & -2.297 & -3.372 & -1.893 & -1.104 \\ 
  ML & -6.843 & -12.915 & -13.290 & -10.044 & -1.845 & -3.995 & -3.717 & -2.727 \\ 
  MML & -4.488 & -6.967 & -6.030 & -3.370 & -1.191 & -2.309 & -1.576 & -0.765 \\ 
  BCML & -1.685 & 0.399 & 3.108 & 5.060 & -0.381 & -0.109 & 1.151 & 1.746 \\ 
  ME & -26.454 & -16.530 & -6.218 & -3.018 & -11.053 & -5.759 & -1.701 & -0.651 \\ 
  LM & -6.675 & -3.970 & -1.516 & -0.037 & -2.103 & -1.490 & -0.280 & 0.157 \\ 
  TMML & -6.666 & -12.448 & -12.722 & -9.500 & -1.867 & -4.119 & -3.788 & -2.831 \\ 
  LS & -2.259 & 1.557 & 6.387 & 8.042 & -0.583 & 0.249 & 1.913 & 2.448 \\ 
  WLS & -2.888 & -1.235 & 2.029 & 3.867 & -0.994 & -1.385 & -0.129 & 0.371 \\ 
  G1 & -14.936 & -17.861 & -15.634 & -12.233 & -4.420 & -5.615 & -4.505 & -3.494 \\ 
\end{tabular}
\end{table}

\begin{table}[H]
\centering
\caption{Bias (multiplied by 1,000) of the estimators of $qD$ for sample sizes $n=30$ and $n=100$ and shape parameter varying between $0.5$ and $3$} 
\label{tab:biasqd}
\begin{tabular}{c|cccc|cccc}
	& \multicolumn{4}{c|}{$30$} & \multicolumn{4}{c}{$100$} \\
	\cline{2-9}
	\hline
	& 0.5 & 1 & 2 & 3 & 0.5 & 1 & 2 & 3 \\ 
	\hline
HF & -10.529 & -7.425 & -4.419 & -2.799 & -3.091 & -2.642 & -1.246 & -0.595 \\ 
  MDE & -12.732 & -9.318 & -7.384 & -6.731 & -3.341 & -3.150 & -2.264 & -2.069 \\ 
  MDHF & -9.495 & -5.470 & -2.367 & -1.438 & -2.609 & -2.045 & -0.715 & -0.178 \\ 
  ML & -7.141 & -9.750 & -10.483 & -8.415 & -1.936 & -3.024 & -2.926 & -2.289 \\ 
  MML & -3.911 & -4.918 & -4.867 & -3.040 & -1.000 & -1.661 & -1.275 & -0.712 \\ 
  BCML & 0.062 & 1.121 & 2.196 & 3.731 & 0.172 & 0.135 & 0.829 & 1.304 \\ 
  ME & -30.234 & -12.525 & -5.010 & -2.763 & -13.061 & -4.363 & -1.372 & -0.623 \\ 
  LM & -5.582 & -2.366 & -1.385 & -0.387 & -1.726 & -0.960 & -0.278 & 0.020 \\ 
  TMML & -6.890 & -9.378 & -10.048 & -7.978 & -1.967 & -3.134 & -2.982 & -2.373 \\ 
  LS & 0.521 & 2.444 & 4.645 & 5.964 & 0.226 & 0.542 & 1.389 & 1.822 \\ 
  WLS & -0.922 & -0.020 & 1.313 & 2.682 & -0.565 & -0.858 & -0.171 & 0.180 \\ 
  G1 & -16.736 & -13.705 & -12.311 & -10.204 & -5.020 & -4.323 & -3.538 & -2.914 \\ 
\end{tabular}
\end{table}

\begin{table}[H]
\centering
\caption{MSE (multiplied by 10,000) of the estimators of $qZI$ for sample size $n=500$ and shape parameter varying between $0.5$ and $3$} 
\label{tab:mseqz500}
\begin{tabular}{c|cccc}
  \hline
 & 0.5 & 1 & 2 & 3 \\ 
  \hline
HF & 0.1938 & 1.3409 & 1.9393 & 1.6931 \\ 
  MDE & 0.2093 & 1.3938 & 1.9611 & 1.7199 \\ 
  MDHF & 0.2066 & 1.3825 & 1.9509 & 1.7059 \\ 
  ML & 0.1332 & 1.0089 & 1.5247 & 1.3087 \\ 
  MML & 0.1309 & 1.0046 & 1.5210 & 1.3027 \\ 
  BCML & 0.1293 & 1.0027 & 1.5217 & 1.3029 \\ 
  ME & 0.8594 & 1.5746 & 1.5514 & 1.3461 \\ 
  LM & 0.2509 & 1.1305 & 1.5972 & 1.4212 \\ 
  TMML & 0.1330 & 1.0111 & 1.5268 & 1.3075 \\ 
  LS & 0.1991 & 1.4873 & 2.2497 & 1.9713 \\ 
  WLS & 0.1583 & 1.1866 & 1.7861 & 1.5540 \\ 
  G1 & 0.2606 & 1.1380 & 1.6014 & 1.4253 \\ 
\end{tabular}
\end{table}

\begin{table}[H]
\centering
\caption{MSE (multiplied by 10,000) of the estimators of $qDI$ for sample size $n=500$ and shape parameter varying between $0.5$ and $3$} 
\label{tab:mseqd500}
\begin{tabular}{c|cccc}
  \hline
 & 0.5 & 1 & 2 & 3 \\ 
  \hline
HF & 1.2272 & 1.6897 & 1.5778 & 1.3406 \\ 
  MDE & 1.4756 & 1.8180 & 1.5194 & 1.2435 \\ 
  MDHF & 1.4680 & 1.8162 & 1.5159 & 1.2372 \\ 
  ML & 0.2774 & 0.6698 & 0.9067 & 0.8453 \\ 
  MML & 0.2737 & 0.6677 & 0.9043 & 0.8411 \\ 
  BCML & 0.2725 & 0.6674 & 0.9044 & 0.8408 \\ 
  ME & 1.7127 & 1.0462 & 0.9223 & 0.8691 \\ 
  LM & 0.5233 & 0.7521 & 0.9494 & 0.9173 \\ 
  TMML & 0.2764 & 0.6711 & 0.9080 & 0.8447 \\ 
  LS & 0.4179 & 0.9904 & 1.3371 & 1.2714 \\ 
  WLS & 0.3306 & 0.7887 & 1.0618 & 1.0031 \\ 
  G1 & 0.5332 & 0.7551 & 0.9524 & 0.9207 \\ 
\end{tabular}
\end{table}

\begin{table}[H]
\centering
\caption{MISE (multiplied by 10,000) of the estimators of $qZ$ for sample size $n=500$ and shape parameter varying between $0.5$ and $3$} 
\label{tab:miseqz500}
\begin{tabular}{c|cccc}
  \hline
 & 0.5 & 1 & 2 & 3 \\ 
  \hline
HF & 0.4028 & 2.7817 & 4.4926 & 4.2785 \\ 
  MDE & 0.2526 & 1.4802 & 1.9732 & 1.7284 \\ 
  MDHF & 0.2495 & 1.4686 & 1.9631 & 1.7143 \\ 
  ML & 0.1609 & 1.0716 & 1.5340 & 1.3152 \\ 
  MML & 0.1582 & 1.0673 & 1.5305 & 1.3091 \\ 
  BCML & 0.1564 & 1.0656 & 1.5312 & 1.3093 \\ 
  ME & 1.0323 & 1.6726 & 1.5610 & 1.3527 \\ 
  LM & 0.3030 & 1.2013 & 1.6072 & 1.4282 \\ 
  TMML & 0.1607 & 1.0740 & 1.5361 & 1.3140 \\ 
  LS & 0.2407 & 1.5807 & 2.2639 & 1.9809 \\ 
  WLS & 0.1913 & 1.2607 & 1.7972 & 1.5616 \\ 
  G1 & 0.3143 & 1.2086 & 1.6113 & 1.4324 \\ 
\end{tabular}
\end{table}

\begin{table}[H]
\centering
\caption{MISE (multiplied by 10,000) of the estimators of $qD$ for sample size $n=500$ and shape parameter varying between $0.5$ and $3$} 
\label{tab:miseqd500}
\begin{tabular}{c|cccc}
  \hline
 & 0.5 & 1 & 2 & 3 \\ 
  \hline
HF & 4.2987 & 3.8925 & 3.1423 & 2.7204 \\ 
  MDE & 2.5884 & 2.2570 & 1.7164 & 1.4467 \\ 
  MDHF & 2.5746 & 2.2554 & 1.7124 & 1.4390 \\ 
  ML & 0.4858 & 0.8303 & 1.0242 & 0.9837 \\ 
  MML & 0.4798 & 0.8282 & 1.0214 & 0.9786 \\ 
  BCML & 0.4783 & 0.8285 & 1.0215 & 0.9779 \\ 
  ME & 2.9800 & 1.2975 & 1.0418 & 1.0111 \\ 
  LM & 0.9171 & 0.9334 & 1.0724 & 1.0671 \\ 
  TMML & 0.4838 & 0.8319 & 1.0256 & 0.9830 \\ 
  LS & 0.7334 & 1.2295 & 1.5103 & 1.4784 \\ 
  WLS & 0.5794 & 0.9783 & 1.1994 & 1.1668 \\ 
  G1 & 0.9305 & 0.9358 & 1.0758 & 1.0714 \\ 
\end{tabular}
\end{table}

\begin{table}[H]
\centering
\caption{Bias (multiplied by 1,000) of the estimators of $qZI$ for sample size $n=500$ and shape parameter varying between $0.5$ and $3$} 
\label{tab:biasqz500}
\begin{tabular}{c|cccc}
  \hline
 & 0.5 & 1 & 2 & 3 \\ 
  \hline
HF & -0.488 & -0.431 & -0.159 & -0.250 \\ 
  MDE & -0.577 & -0.751 & -0.747 & -1.017 \\ 
  MDHF & -0.457 & -0.363 & -0.130 & -0.297 \\ 
  ML & -0.393 & -0.517 & -0.668 & -0.752 \\ 
  MML & -0.269 & -0.170 & -0.239 & -0.370 \\ 
  BCML & -0.109 & 0.269 & 0.306 & 0.130 \\ 
  ME & -3.307 & -0.790 & -0.254 & -0.314 \\ 
  LM & -0.498 & 0.100 & 0.065 & -0.122 \\ 
  TMML & -0.422 & -0.582 & -0.747 & -0.835 \\ 
  LS & -0.107 & 0.369 & 0.607 & 0.428 \\ 
  WLS & -0.260 & -0.099 & -0.049 & -0.192 \\ 
  G1 & -0.950 & -0.721 & -0.779 & -0.851 \\ 
\end{tabular}
\end{table}

\begin{table}[H]
\centering
\caption{Bias (multiplied by 1,000) of the estimators of $qDI$ for sample size $n=500$ and shape parameter varying between $0.5$ and $3$} 
\label{tab:biasqd500}
\begin{tabular}{c|cccc}
  \hline
 & 0.5 & 1 & 2 & 3 \\ 
  \hline
HF & -0.649 & -0.221 & 0.016 & -0.129 \\ 
  MDE & -0.563 & -0.250 & -0.202 & -0.549 \\ 
  MDHF & -0.619 & -0.116 & 0.108 & -0.095 \\ 
  ML & -0.432 & -0.375 & -0.526 & -0.624 \\ 
  MML & -0.253 & -0.092 & -0.195 & -0.317 \\ 
  BCML & -0.019 & 0.266 & 0.225 & 0.085 \\ 
  ME & -3.954 & -0.571 & -0.207 & -0.272 \\ 
  LM & -0.458 & 0.134 & 0.038 & -0.119 \\ 
  TMML & -0.475 & -0.428 & -0.587 & -0.690 \\ 
  LS & 0.057 & 0.370 & 0.451 & 0.314 \\ 
  WLS & -0.210 & -0.026 & -0.051 & -0.178 \\ 
  G1 & -1.113 & -0.536 & -0.613 & -0.705 \\ 
\end{tabular}
\end{table}

\subsection{Conclusions}
The methods which were not included in the main article perform worse than BCML and MML in all cases considered. The TMML method gives results close to ML, since it is an approximation of ML. In terms of bias it seems that the BCML is best for small $\beta$ (0.5 and 1), while LM is best in case of larger $\beta$ (2 and 3). The simulations performed for larger sample size (500) give similar conclusions as the ones with smaller sample sizes (30 and 100), but in this case MML is slightly better than BCML in terms of MSE and MISE when $\beta$ is 1 and 2.

\end{document}